\newtheoremstyle{myremark}     {10pt}{10pt}{}{}{\bfseries}{.}{.5em}{}
\newtheorem{thm}{Theorem}[section]
\newtheorem{lem}[thm]{Lemma}
\newtheorem{prop}[thm]{Proposition}
\theoremstyle{definition}
\newtheorem{defn}[thm]{Definition}
\theoremstyle{myremark}
\numberwithin{equation}{section}
\newcommand{\h}{\mathcal{H}}
\newcommand{\CP}{\mathcal{P}}
\newcommand{\CB}{\mathcal{B}}
\newcommand{\CQ}{\mathcal{Q}}
\newcommand{\CI}{\mathcal{I}}
\newcommand{\CD}{\mathcal{D}}
\newcommand{\CE}{\mathcal{E}}
\newcommand{\CM}{\mathcal{M}}
\newcommand{\CN}{\mathcal{N}}
\newcommand{\C}{\mathbb{C}}
\newcommand{\R}{\mathbb{R}}
\newcommand{\E}{\mathbb{E}}
\newcommand{\Z}{\mathbb{Z}}
\newcommand{\N}{\mathbb{N}}
\newcommand{\abs}[1]{\left\vert#1\right\vert}
\newcommand{\norm}[1]{\left\Vert#1\right\Vert}
\newcommand{\esssup}{\mathrm{esssup}}
\begin{document}
	
	\title[non-commutative square function]{Weighted weak $(1,1)$ estimate for non-commutative square function}
	
	\author[S. Ray]{Samya Kumar Ray}
	\address{ School of Mathematics, Indian Institute of Science Education and Research Thiruvananthapuram, Kerala - 695551}
	\email{samya@iisertvm.ac.in, samyaray7777@gmail.com}
	
	\author[D. Saha]{Diptesh Saha}
	\address{Statistics-Mathematics Unit, Indian Statistical Institute, Kolkata, India}
	\email{dptshs@gmail.com}

	\keywords{Non-commutative square functions, Muckenhoupt's weights}
	\subjclass[2010]{42B20, 46L52, 46L51, 47A35, 46L55 }
	
	

	\begin{abstract}
		In this article, we consider weighted weak type $(1,1)$ inequality for certain square function associated to differences of ball averages and martingale in the non-commutative setting. This establishes a weighted version of main result of \cite{hong2021noncommutative}.
	\end{abstract}
	
	\maketitle 
	\section{Introduction}
	A fundamental problem in harmonic analysis and measure theoretic ergodic theory is to study  weak and strong type inequalities of various operators. Though it is often motivated by the question of pointwise convergence of different averages, it has also other wide range of applications. A rather quantitative approach for these kinds of problems is to establish the so called `square function' inequalities.
	In this article, we study weighted versions of certain square function inequalities in the non-commutative setting motivated from ergodic theory.

	
	Let us consider the measure space $(\R^d, \CB, \mu)$ where $\CB$ is the Borel $\sigma$-algebra and $\mu$ the translation invariant Borel measure on $\mathbb{R}^d.$ Let $n\in\mathbb{Z}.$ Fix the $n$-th dyadic filtration $\CB_n$ generated by the dyadic cubes of side-length $2^{-n}$.  Hence for each $n\in\mathbb{Z},$ we have the associated classical conditional expectation $\E_n$. 
	Let $\mathcal M$ be a semifinite von Neumann algebra with a f.n.s. trace $\tau$. Then $\E_n\otimes I_{\mathcal M}$ is a conditional expectation on $\mathcal N:=L^\infty(\mathbb R^d)\overline{\otimes} \mathcal M$ which we denote by $\CE_n.$ Note that $L^\infty(\mathbb R^d)\overline{\otimes} \mathcal M$ is again a semifinite von Neumann algebra with f.n.s. trace $\varphi:=\int_{\mathbb{R^d}}\otimes \tau.$ Consider the non-commutative $L^1$-space $L^1(\CM, \tau)$. Now for any locally integrable function $f: \R^d \to L^1(\CM, \tau)$, we consider the averaging operator
	\begin{equation}\label{avg op}
		M_t f (x)= \frac{1}{\mu(B_t)} \int_{B_t} f(x+y) d\mu(y), ~ x \in \R^d,
	\end{equation}
	where, for all $t \in \R$, $B_t$  is the ball of radius $2^{-t}$ with center at the origin. The non-commutative square function that we are going to consider is associated to the following sequence of operators,
	\begin{equation}\label{nc1.2}
		T_n f(x):= (M_n - \CE_n) f(x), ~ x \in \mathbb{R}^d.
	\end{equation}
	If we replace the von Neumann algebra $\CM$ by the set of complex numbers $\C$, then in this case the square function is formulated as
	\begin{equation}
		Lf(x):= \Big( \sum_{k} |(M_k - \mathbb E_k) f(x)|^2 \Big)^{\frac{1}{2}}, ~ x \in \mathbb{R}^d.
	\end{equation}
	

	To obtain a similar result, we need an appropriate norm in the non-commutative setup. For this purpose, one has to consider the row and column spaces. The row space (resp. column space) is defined as the closure of the finite sequences in $L^p(\mathcal{N})$ with respect to the norm, defined as
	\begin{equation}
		\left\Vert (f_k) \right\Vert_{L^p(\mathcal N;\ell^r_2)}:= \left\Vert \Big(\sum_k |f_k^*|^2\Big)^{1/2} \right\Vert_p ~\left( \text{resp.} \left\Vert (f_k) \right\Vert_{L^p(\mathcal N;\ell^c_2)}:= \left\Vert \Big(\sum_k |f_k|^2\Big)^{1/2} \right\Vert_p \right).
	\end{equation}
	
	The associated row-column space  is defined as the the following.
	\begin{equation}
		L^p(\mathcal N;\ell^{rc}_2):= 
		\begin{cases}
			L^p(\mathcal N;\ell^{r}_2) \cap L^p(\mathcal N;\ell^{c}_2),~ if~ 2 \leq p \leq \infty \\
			\text{with} \left\Vert \cdot \right\Vert:= \max \{ \left\Vert \cdot \right\Vert_{L^p(\mathcal N;\ell^{r}_2)}, \left\Vert \cdot \right\Vert_{L^p(\mathcal N;\ell^{c}_2)} \}\\\\
			L^p(\mathcal N;\ell^{r}_2)+ L^p(\mathcal N;\ell^{c}_2),~ if~ 1\leq p<2\\ \text{with } \left\Vert (f_k) \right\Vert:= \inf\limits_{f_k=g_k+ h_k} \{ \left\Vert (g_k) \right\Vert_{L^p(\mathcal N;\ell^{c}_2)} + \left\Vert (h_k) \right\Vert_{L^p(\mathcal N;\ell^{r}_2)} \}
		\end{cases}
	\end{equation}
	
	
	Similarly one can also define weak row, column, row-column spaces with natural definitions as above. In \cite{hong2021noncommutative}, the authors proved the strong and weak type inequalities for non-commutative square function associated to the sequence of operators $(T_k)_{k\in\mathbb{Z}}$.

	One of the most active areas of research in classical analysis is to obtain weighted analogues of various results in classical analysis. Hence, it is natural to ask for a weighted analogue of the result above. In the classical setting, this direction of research was started with the celebrated work of \cite{muckenhoupt1972weighted} and continued further by \cite{fefferfeffer12} and many others. Moreover, the question about whether there is a linear dependence of the norm inequalities on the $A_2$-constant became the famous $A_2$-conjecture, which was finally settled by Hyt\"{o}nen \cite{Hytonen12ann}. It is due to development of operator space theory and non-commutative probability theory, non-commutative analysis have been developing in a great speed in the past few decades and many results in non-commutative harmonic analysis and ergodic theory were established in \cite{Junge2007}, \cite{Mei07}, \cite{Homglaixu23}, \cite{Hongraywang23}, \cite{Hongliaowang21}, \cite{hong2022noncommutative} etc. Despite these remarkable development, there are not many results available in the weighted setting. Recently, in \cite{galkazka2022sharp}, the authors have studied non-commutative Doob's inequality in the weighted setting and also studied some singular operators associated to nice kernels. In this paper, we prove the following result.
	\begin{thm}\label{thm1.1}
		Let $1 \leq p < \infty$ and $w$ be an $A_1$-weight. Then there exists a constant $C_p(w)$, depending only on $p$ and $w$ such that
		\begin{equation}
			\begin{cases}
				\left\Vert (T_k f) \right\Vert_{L^{1, \infty}(\mathcal N_{w};\ell^{rc}_2)} \leq C_1(w) \left\Vert f \right\Vert_{1,w},~ \forall f \in L^1(\mathcal N_w)~ \text{and,}\\\\
				\left\Vert (T_k f) \right\Vert_{L^p(\mathcal N_w;\ell^{rc}_2)} \leq C_p(w) \left\Vert f \right\Vert_{p,w}~ \forall f \in L^p(\mathcal N_w),~ \text{when } 1<p< \infty.
			\end{cases}
		\end{equation}   
	\end{thm}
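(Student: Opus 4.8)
The plan is to follow the by-now-standard route for non-commutative weak-type estimates, adapting the unweighted argument of \cite{hong2021noncommutative} to the weighted measure $d\mu_w = w\,d\mu$. The key structural observation is that $T_k = M_k - \CE_k$ splits naturally as $T_k = (M_k - \CE_{k}) = (M_k - \CE_{k-N}) - (\CE_k - \CE_{k-N})$ for a suitable fixed $N$, or more profitably as a \emph{smooth part} plus a \emph{martingale part}: writing $T_k f = (M_k f - \CE_k M_k f) + (\CE_k M_k f - \CE_k f)$ and noting that $\CE_k M_k f - \CE_k f = \CE_k(M_k f - f)$ is controlled by the martingale square function of $(M_k f - f)_k$, while $M_k f - \CE_k M_k f$ has a kernel that is a difference of a ball-average kernel and its dyadic conditional expectation, hence enjoys Hörmander-type smoothness/cancellation in the space variable. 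For $p=2$ the unweighted strong $(2,2)$ bound is already in \cite{hong2021noncommutative}; the weighted strong $(p,p)$ bounds for $1<p<\infty$ will then follow from the weak $(1,1)$ endpoint by interpolation (using that $w\in A_1\subset A_p$, so $\mathcal N_w$ behaves like a semifinite von Neumann algebra with trace $\varphi_w = \int(\cdot)w\,d\mu\otimes\tau$ and the Marcinkiewicz/real interpolation scale for non-commutative $L^p(\ell_2^{rc})$ spaces applies) together with a weighted strong $(2,2)$ estimate obtained by extrapolation or by a direct $A_2$-type argument. So the real content is the weighted weak $(1,1)$ inequality.

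For the weak $(1,1)$ bound I would run the non-commutative Calderón–Zygmund decomposition at height $\lambda$ adapted to the weight. Given $f\in L^1(\mathcal N_w)_+$ with $\varphi_w(f)=1$, apply Cuculescu's construction to the martingale $(\CE_k f)_k$ to produce projections $q_k$ decreasing in $k$ and $q=\bigwedge_k q_k$ with $\varphi_w(\mathbf 1 - q)\lesssim \lambda^{-1}$, and decompose $f = g + b$ into a "good" part (bounded by $C\lambda$ in $L^\infty$, modulo the weight, and controlled in $L^2$) and a "bad" part supported off $q$. For the good part one uses the weighted strong $(2,2)$ estimate and Chebyshev: $\varphi_w\{ |T_k g|_{rc} > \lambda \}\lesssim \lambda^{-2}\|T_k g\|_{L^2(\mathcal N_w;\ell_2^{rc})}^2 \lesssim \lambda^{-2}\|g\|_{2,w}^2 \lesssim \lambda^{-1}$. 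For the bad part one splits again into the martingale contribution — handled exactly as in non-commutative martingale weak $(1,1)$ theory, i.e.\ via the Cuculescu projections and the fact that the dyadic conditional expectations localize, using the weighted non-commutative Doob inequality from \cite{galkazka2022sharp} — and the smooth/kernel contribution $M_k b - \CE_k M_k b$. The latter is where the geometry enters: off an enlargement (by a fixed dilation factor depending on $d$) of the supports of the pieces of $b$, the difference of kernels integrates to something small thanks to the cancellation $\int (K^{M}_k - K^{\CE}_k) = 0$ and the $A_1$ condition $w(B)\lesssim |B|\inf_B w$, which converts the classical Hörmander tail estimate into a bound against $\varphi_w(\,\cdot\,)$; on the enlargement itself one uses that $w\in A_1$ gives $\mu_w$ of the enlarged bad cubes $\lesssim \mu_w$ of the original ones $\lesssim \lambda^{-1}$.

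The main obstacle, I expect, is the interaction between the ball averages $M_k$ (which are \emph{not} adapted to the dyadic filtration and do not commute with the $\CE_k$) and the Cuculescu projections coming from the martingale: in the non-commutative setting one cannot simply "restrict to a set", so the operator $q_j (M_k b) q_j$ versus $M_k(q_j b q_j)$ mismatch has to be controlled, and the off-diagonal decay of $M_k - \CE_k M_k$ must be combined with trace estimates rather than pointwise ones. This is precisely the difficulty overcome in \cite{hong2021noncommutative} in the unweighted case by a careful pseudo-localization argument; here I would re-run that argument keeping track of the weight, using at each step that $A_1$ weights satisfy a reverse-doubling-free lower bound $w(2B)\le C w(B)$ and the pointwise domination $w(x)\le C\,\CE_k(w)(x)$ a.e.\ on each dyadic cube of generation $k$, so that all the classical constants that appear are replaced by $w$-uniform ones. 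Once the pseudo-localization estimate is established with weighted constants, assembling the weak $(1,1)$ bound is routine, and the $1<p<\infty$ case follows by interpolation with the weighted $L^2$ bound as indicated above.
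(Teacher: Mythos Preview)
Your high-level roadmap --- linearize via Rademachers, run a non-commutative Calder\'on--Zygmund decomposition with Cuculescu projections, handle the good part by the weighted strong $(2,2)$ bound plus Chebyshev, handle the bad part by geometric/cancellation arguments off a dilated exceptional set, then interpolate and dualize for $1<p<\infty$ --- is exactly the route the paper takes. In that sense your plan is correct.

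Where you diverge from the paper is in the specific machinery for the bad part. First, the paper does \emph{not} split $T_k$ further: your proposed decomposition $T_k f = (M_k f - \CE_k M_k f) + \CE_k(M_k f - f)$ is a detour, and your claim that the second piece is ``controlled by the martingale square function of $(M_k f - f)_k$'' is not meaningful as stated (that sequence is not a martingale difference sequence). Second, the paper does not invoke pseudo-localization in the Parcet sense; instead it uses the three-piece Calder\'on--Zygmund decomposition of Cadilhac--Conde-Alonso--Parcet, $f = g + b_d + b_{\mathrm{off}}$, which separates the diagonal and off-diagonal bad parts and allows each to be treated by hand. For $k\ge n$ one gets $\zeta T_k b_n \zeta = 0$ by direct support considerations (your ``enlargement'' idea), while for $k<n$ the entire weighted content is concentrated in a single lemma: for the boundary-average operator $M_{k,n}h(x)=|B_k|^{-1}\int_{\mathcal I(B_k+x,n)}h$, one has $\|M_{k,n}h\|_{p,w}^p \lesssim C(w)[w]_{A_1} 2^{(k-n)\delta}\|h\|_{p,w}^p$, where $\delta\in(0,1)$ is the $A_\infty$ exponent of $w$. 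This geometric decay in $k-n$ is what makes the sums converge and is where the $A_1$ hypothesis genuinely enters (via $w(S)/w(Q)\lesssim(\mu(S)/\mu(Q))^\delta$ and $w(Q)/|Q|\le [w]_{A_1}\inf_Q w$). Your sketch gestures at ``converting the classical H\"ormander tail estimate'' but does not isolate this step; without it the bad-part estimate does not close. So: right architecture, but the load-bearing lemma is the weighted $M_{k,n}$ bound rather than a pseudo-localization argument, and the extra splitting of $T_k$ should be dropped.
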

	We refer section \eqref{prelim} for any unexplained notation.	Note that by \cite{krause2018weighted}, the above theorem is easy to prove  for $p=2$ as $L_2(\mathcal M)$ is a Hilbert space. For other values of $p$ we linearize the problem by introducing the operator $T$ as in  eq. \ref{linearization}. Then by non-commutative Khintchine inequalities, to prove weak type $(1,1)$ of the square function, it is enough to prove weak type $(1,1)$ inequality for $T$. The strong $(p,p)$ then follows from a routine argument (see Theorem \eqref{strongbdd}). To prove weak type $(1,1)$ for $T$, our method relies on the Calder\' on-Zygmund decomposition as in \cite{hong2021noncommutative}. However, unlike \cite{hong2021noncommutative}, we use recent Calder\' on-Zygmund decomposition developed by \cite{cadilhac2022spectral} and \cite{cadilhac2022noncommutative}. One more ingredient is weighted Doob's inequality which was studied in \cite{galkazka2022sharp}. We have tried our best to keep track of the constants and dependence on $w$ as explicit as possible in the first inequality in Theorem \eqref{thm1.1}. However, it seems to be a challenging problem to obtain the best dependence on $w$ even in the classical situation. Although the constants denoted by $C(w),C_1(w)$ etc, and similar notation appear throughout the paper, they are not necessarily identical and may vary from one occurrence to another, depending on the context.
	
	Let us briefly discuss how the rest of the article has been
	organized. In Section \ref{prelim} we discuss all the basic definitions and concepts required. This includes the notion of weights and non-commutative $L_p$-spaces. In Section \ref{weaktypeesti}, we recall the Calder\'on-Zygmund decomposition and the proof of the weighted inequality of the square function. After submitting our manuscript to the arxiv, Dejian Zhou sent us a file where he with J. Cao and D. Zhou et al. also independently obtained Theorem 1.1.
\section{Preliminaries}\label{prelim}
	Throughout this article, $\CM (\subseteq \CB(\h))$ is assumed to be a separable von Neumann algebra equipped with a faithful, normal, semifinite (f.n.s.) trace $\tau$. The lattice of projections on the von Neumann algebra $\CM$ is denoted by $\CP(\CM)$. By $\CM'$, we will denote the commutant of the von Neumann algebra $\CM$, which is also a separable von Neumann algebra defined on the Hilbert space $\h$.
	
	\subsection{Non-commutative $L^p$-spaces}
	Let $\CM$ be a semi-finite von Neumann algebra with a f.n.s trace $\tau$. An operator (possibly unbounded) $x$, which is closed and densely defined on $\h$, is said to be affiliated to $\CM$ if it commutes with all unitaries $u'$ of $\CM'$. The same operator $x$ is called $\tau$-measurable if for all $\delta>0$ there exists  $p \in \CP(\CM)$ such that $p\h \subseteq \CD(x)$ (domain of $x$) and $\tau(1-p)< \delta$. The space of all $\tau$-measurable operators, which are affiliated to $\CM$ is denoted by $L^0(\CM, \tau)$. It is well-known that  $\tau$ can be extended to the positive cone $L^0(\CM, \tau)_+$ and one can define for $x \in L^0(\CM, \tau)$
	\begin{align*}
		\norm{x}_p:= (\tau(\abs{x}^p))^{1/p}, ~ 0 < p< \infty,
	\end{align*}
	where $\abs{x}:= (x^*x)^{\frac{1}{2}}$.
	Then the non-commutative $L^p$ space associated to $(\CM, \tau)$ for $0<p< \infty$ is defined as 
	\begin{align*}
		L^p(\CM, \tau):= \{x \in L^0(\CM, \tau): \norm{x}_p< \infty\},
	\end{align*}
	and  $L^\infty(\CM):= \CM$. Furthermore, it is a Banach space with respect to the norm $\norm{\cdot}_p$ when $1\leq p\leq\infty$.
	
	On the other hand, the non-commutative weak $L^p$-space associated to $(\CM, \tau)$ for $0<p< \infty$ is defined as a subspace of $L^0(\CM, \tau)$ such that for $x \in L^0(\CM, \tau)$ the following quasi-norm
	\begin{align*}
		\norm{x}_{p, \infty}:= \sup_{\lambda>0} \lambda \tau(\chi_{(\lambda, \infty)}(\abs{x}))^{\frac{1}{p}}
	\end{align*}
	is finite. We denote the collection as $L^{p, \infty}(\CM)$. We note that for $x_1, x_2 \in L^{1, \infty}(\CM)$ and $\lambda>0$,
	\begin{equation}
		\tau(\chi_{(\lambda, \infty)}(\abs{x_1+x_2})) \leq \tau(\chi_{(\lambda/2, \infty)}(\abs{x_1})) + \tau(\chi_{(\lambda/2, \infty)}(\abs{x_2})).
	\end{equation}
	
	\subsection{Non-commutative Martingales}
	Let $(\CM, \tau)$ be a semifinite von Neumann algebra as described before. Let $d \in \N$ be fixed and we consider the von Neumann algebra $\CN:= L^\infty(\R^d, \CB, \mu) \otimes \CM$, where $\R^d$ is equipped with the Lebesgue measure $\mu$ and Borel $\sigma$-algebra $\CB$. Clearly, $\CN$ is equipped with the f.n.s. trace $\varphi := (\int \cdot~ d\mu) \otimes \tau$. For $k, n_1, \ldots, n_d \in \Z$ consider the dyadic cube in $\R^d$;
	\begin{align*}
		[2^kn_1, 2^k(n_1+1)) \times \cdots \times [2^kn_d, 2^k(n_d+1)).
	\end{align*}
	By $\CQ$, we denote the collection of all dyadic cubes in $\R^d$. For any $k \in \Z$, $\CQ_k$ will denote the collection of dyadic cubes of side length $2^{-k}$. Therefore, $\abs{Q}$, the volume of a $Q \in \CQ_k$ is $2^{-dk}$. Let $k \in \Z$ and  $\CB_k$ be the $\sigma$-algebra generated by the dyadic partition $\CQ_k$. Now, for every $k \in \Z$ we define the von Neumann subalgebra
	\begin{align*}
		\CN_k:= L^\infty(\R^d, \CB_k, \mu) \otimes \CM
	\end{align*}
	of $\CN$. Now since $\varphi|_{\CN_k}$ is again semi-finite, there exists a normal conditional expectation $\CE_k$ which is of the form $\CE_k:= \E_k(\cdot | \CB_k) \otimes I_{\CM}$, where $\E_k(\cdot | \CB_k)$ is the conditional expectation from $L^\infty(\R^d, \CB, \mu)$ onto $L^\infty(\R^d, \CB_k, \mu)$. Therefore, for $k \in \Z$ and $f \in L^1(\CN, \varphi)$ we have
	\begin{align*}
		\CE_k(f)= \sum_{Q \in \CQ_k} \Big( \frac{1}{\abs{Q}} \int_Q f \Big) \chi_Q.
	\end{align*}
	Moreover, observe that $(\CN_k)_{k \in \Z}$ is an increasing family of subalgebras of $\CN$. Hence, for every $f \in L^1(\CN, \varphi)$, the sequence $(\CE_k(f))_{k \in \Z}$ forms a non-commutative martingale, that is 
	\begin{align*}
		\CE_j(\CE_k(f))= \CE_j(f), ~ \text{for } j \leq k.
	\end{align*}
	
	\subsection{Martingale weights and weighted $L^p$ spaces}
	
	Consider the measure space $(\R^d, \CB, \mu)$ as described above. A weight $w$ is a positive, integrable function on $(\R^d, \CB, \mu)$. For any $C \in \CB$, we will write $w(C)= \int_C w d\mu$. For $1<p< \infty$, the weight $w$ is said to satisfy the Muckenhoupt's $A_p$ condition if 
	\begin{align*}
		[w]_{A_p}:=\sup_{Q} \frac{\int_Q w d \mu}{\mu(Q)} \Big(\frac{\int_Q w^{1/(1-p)} d \mu}{\mu(Q)} \Big)^{p-1} < \infty,
	\end{align*}
	and $w$ is said to satisfy the Muckenhoupt's $A_1$ condition if 
	\begin{equation*}
		[w]_{A_1}:=\sup_{Q} \esssup_{x \in \R^d} \frac{\int_Q w d \mu / \mu(Q)}{w(x)}< \infty,
	\end{equation*}
	where in both cases the supremum is taken over all cubes $Q \subset \R^d$ with sides parallel to axes. For a measurable function $f$ on $(\R^d, \CB, \mu)$ define the Hardy-Littlewood maximal function as
	\begin{align*}
		Mf(x):= \sup_{Q} \frac{1}{\mu(Q)} \int_Q \abs{f(y)} d\mu(y); ~ x \in \R^d,
	\end{align*}
	(where the supremum is taken over all cubes $Q \subset \R^d$ with sides parallel to axes containing $x$)
	and recall the following theorem (see \cite{duoandikoetxea2024fourier}).
	\begin{thm}\label{weighted max ineq classical}
		For $1\leq p< \infty$, the weak type $(p,p)$ inequality
		\begin{align*}
			w \Big( \{x \in \R^d: Mf(x)> \lambda \} \Big) \leq \frac{C}{\lambda^p} \int_{\R^d} \abs{f(y)}^p w(y) d \mu(y)
		\end{align*}
		is true if and only if $w$ satisfies the Muckenhoupt's $A_p$ condition.
	\end{thm}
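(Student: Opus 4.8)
The plan is to prove the two implications separately, following Muckenhoupt's classical argument. For the \emph{sufficiency} direction ($w\in A_p\Rightarrow$ weak $(p,p)$) I fix $\lambda>0$, put $E_\lambda=\{x\in\R^d : Mf(x)>\lambda\}$, and for each $x\in E_\lambda$ choose a cube $Q_x\ni x$ with $\mu(Q_x)^{-1}\int_{Q_x}\abs{f}\,d\mu>\lambda$; a Vitali-type covering lemma then extracts a countable pairwise disjoint subfamily $\{Q_j\}$ with $E_\lambda\subseteq\bigcup_j 3Q_j$ and $\mu(Q_j)<\lambda^{-1}\int_{Q_j}\abs{f}\,d\mu$. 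When $p>1$ the next step is H\"older's inequality with exponents $p,p'$, giving $\int_{Q_j}\abs{f}\le(\int_{Q_j}\abs{f}^pw)^{1/p}(\int_{Q_j}w^{1-p'})^{1/p'}$ (local integrability of $w^{1-p'}$ being a consequence of $[w]_{A_p}<\infty$); feeding this into the bound for $\mu(Q_j)$ and invoking the $A_p$ inequality on the dilated cube $3Q_j$ (with $\mu(3Q_j)=3^d\mu(Q_j)$ and $\int_{Q_j}w^{1-p'}\le\int_{3Q_j}w^{1-p'}$) yields $w(3Q_j)\le c_{d,p}[w]_{A_p}\lambda^{-p}\int_{Q_j}\abs{f}^pw$, and summing over the disjoint $Q_j$ gives the estimate. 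For $p=1$ the H\"older step is replaced by the pointwise bound $\mu(3Q_j)^{-1}w(3Q_j)\le[w]_{A_1}\,w(y)$ for a.e.\ $y\in Q_j$, which directly yields $w(3Q_j)\le 3^d[w]_{A_1}\lambda^{-1}\int_{Q_j}\abs{f}\,w$.

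For the \emph{necessity} direction I assume the weak $(p,p)$ inequality with constant $C$ and test it on suitable functions. Fixing a cube $Q$ and taking, for $p>1$, the truncations $f_n=\min(w^{1-p'},n)\chi_Q$, one has $Mf_n(x)\ge\mu(Q)^{-1}\int_Q\min(w^{1-p'},n)\,d\mu$ for every $x\in Q$, so the weak type bound at a level just below this average forces $w(Q)\le C\lambda^{-p}\int_Qf_n^pw$. The algebraic identity $(w^{1-p'})^{p-1}w=1$ (equivalently $w^{(1-p')p+1}=w^{1-p'}$) gives $\int_Qf_n^pw\le\int_Q\min(w^{1-p'},n)\,d\mu$, so letting $\lambda$ increase to the average and then $n\to\infty$ produces $\frac{w(Q)}{\mu(Q)}\bigl(\frac{1}{\mu(Q)}\int_Qw^{1-p'}\,d\mu\bigr)^{p-1}\le C$; taking the supremum over cubes yields $[w]_{A_p}\le C$ (and the case in which the truncated integrals diverge is incompatible with $w(Q)>0$, so $w^{1-p'}$ is automatically locally integrable). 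For $p=1$ one tests instead against $f=\chi_S$ for an arbitrary measurable $S\subseteq Q$ of positive measure, obtaining $\frac{w(Q)}{\mu(Q)}\le C\,\frac{w(S)}{\mu(S)}$, and optimizing over $S$ (taking $S$ of the form $\{x\in Q:w(x)\le t\}$) recovers the $A_1$ condition.

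I do not expect a genuine obstacle, the argument being entirely classical; the only points requiring a little care are the selection of the covering family in the sufficiency part --- arranging that the dilated cubes still cover $E_\lambda$ while the originals stay disjoint --- and the truncation-and-limit device in the necessity part, which is exactly what lets one avoid assuming a priori the local integrability of $w^{1-p'}$. Full details can be found in \cite{duoandikoetxea2024fourier}.
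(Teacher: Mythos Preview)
Your sketch is correct and follows the classical Muckenhoupt argument exactly as it appears in the reference \cite{duoandikoetxea2024fourier}; the paper itself does not supply a proof of this theorem but merely cites that reference, so there is nothing further to compare.
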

	
	\noindent Then, it follows from Theorem \ref{weighted max ineq classical} that for any $S, Q \in \CB$ with $S \subset Q$ and $\int_Q f d\mu >0$
	\begin{equation}\label{weight prop}
		\frac{w(Q)}{\mu(Q)} \leq [w]_{A_1} \frac{w(S)}{\mu(S)}
	\end{equation}
	and 
	\begin{equation*}
		\frac{w(Q)}{\mu(Q)} \leq [w]_{A_1} \inf_{x \in Q} w(x).
	\end{equation*}
	
	Furthermore, if $w$ satisfies Muckenhoupt's $A_1$ condition,  then there exists $0< \delta< 1$ (depending on $[w]_{A_1}$) and a constant $C(w)>0$ such that for all $S \subseteq Q$
	\begin{equation*}
		\frac{w(S)}{w(Q)} \leq C(w) \left( \frac{\mu(S)}{\mu(Q)} \right)^\delta.
	\end{equation*}
	
The above properties can be found in \cite{duoandikoetxea2024fourier}. In this article, we will consider weights of the form $w \otimes I_{\CM}$, where $w$ is a classical weight as defined above. Observe that $w \otimes I_{\CM}$ commutes with all elements of $\CN$ and we say $w \otimes I_{\CM}$ satisfy Muckenhoupt's $A_1$ condition if $w$ satisfies the same. Furthermore, we define $[w \otimes I_{\CM}]_{A_1}:= [w]_{A_1}$. Therefore, from now on we will only use the notation $w$ for such weights without any ambiguity. For such an weight $w$, let us recall that the weighted non-commutative $L^p$-space ($1 \leq p < \infty$) is defined as 
	\begin{align*}
		L^p_w(\CN):= \{x \in L^0(\CN, \varphi): xw^{1/p} \in L^p(\CN, \varphi)\}
	\end{align*}
	In other words, if we define the weighted f.n.s. trace $\varphi_w(\cdot):= \varphi(\cdot w)$ on the von Neumann algebra $\CN$, then it is eminent that $L^p_w(\CN)$ is the non-commutative $L^p$-space on $(\CN, \varphi_w)$. In the sequel we will sometime write $\CN_w$ to emphasise that the von Neumann algebra $\CN$ is considered along with the trace $\varphi_w$. The associated Banach space norm in $L^p_w(\CN)$ will be denoted by $\norm{\cdot}_{p,w}$.
	
	\subsection{Non-commutative Calder\'on-Zygmund decomposition}
	
	Let $f \in L^1(\CN, \varphi_w) \cap \CN$ be positive and $x\mapsto\|f(x)\|_1$ is compactly supported. More precisely, observe that the set 
	\begin{align*}
		\CN_{c, +}:= L^1(\CN, \varphi_w) \cap \{ f:\R^d \to \CM: f \in \CN_+,~ \overrightarrow{\text{supp}} f \text{ is compact } \}
	\end{align*}
	is a dense subset of $L^1(\CN, \varphi_w)_+$, where $\overrightarrow{\text{supp}} f$ is defined as the support of the map $x\mapsto\|f(x)\|_1$. From now on, we confine ourselves to the above dense set to obtain the desired estimates. Furthermore, it is well-known that for all $f \in \CN_{c, +}$ and $\lambda>0$, there exists $m_\lambda(f) \in \Z$ such that $\CE_k(f) \leq \lambda 1_\CN$ for all $k \leq m_\lambda(f)$. In the following, we will fix $f \in \CN_{c, +}$ and $\lambda >0$ and assume without loss of generality $m_\lambda(f)=0$.
	
	Consider the von Neumann algebra  $\CN$ and the weight $w$ satisfying Mukenhopt's  $A_1$ condition. We further recall the dyadic filtration $\{ \CN_n \}_{n \geq 0}$ as described above and consider the martingale $\{\CE_n(f)\}_{n \geq 0}$. Then by Cuculescu's construction (\cite{cuculescu1971martingales}),  there exists a decreasing sequence of projections $(q_n)_{n \in \Z}$ in $\CN$, with $q_n=1$ for all $n \leq 0$ and for $n >0$
	\begin{align*}
		q_n:= \chi_{(0, \lambda]}(q_{n-1} \CE_n(f) q_{n-1})
	\end{align*}
	satisfying
	\begin{enumerate}
		\item[(i)] $q_n \CE_n(f) q_n \leq \lambda q_n$ for all $n \in \N$,
		\item[(ii)] $q_n$ commutes with $q_{n-1} \CE_n(f) q_{n-1}$,
		\item[(iii)] suppose $q= \wedge_{n \in \Z} q_n$, then 
		\begin{align*}
			q f q \leq \lambda q, ~ \varphi(1-q) \leq \frac{\norm{f}_1}{\lambda} .
		\end{align*}
	\end{enumerate}
	
	Furthermore, write $p_n= q_{n-1} - q_n$ for $n \in \Z $ and observe that $p_n$'s are disjoint projections in $\CN$ and 
	\begin{equation}
		\sum_{n \in \Z} p_n = 1- q.
	\end{equation}
	
	\noindent Then, by the Calder\'on- Zygmund decomposition (cf. \cite{cadilhac2022spectral}) $f$ can be decomposed as
	\begin{equation}
		f= g +b_d + b_{off},
	\end{equation}
	where, denoting $f_j:= \CE_j(f)$ we have
	\begin{enumerate}
		\item $g= qfq + \sum_{j \geq 1} p_j f_j p_j$,
		\item $b_d= \sum_{j \geq 1} p_j(f- f_j) p_j$,
		\item $b_{off}= \sum_{j \geq 1} p_j(f- f_j) q_j + q_j(f- f_j) p_j$.
	\end{enumerate}
	
	\noindent The following proposition is proved in \cite{cadilhac2022spectral} and also crucial for our purpose.

	\begin{prop}
		The following are true.
		\begin{enumerate}
			\item $\norm{g}_1 \leq \norm{f}_1$ and $\norm{g}_\infty \lesssim \lambda$.
			\item $\sum_{j \geq 1} \norm{p_j(f- f_j) p_j}_1 \leq 2 \norm{f}_1$ and \\
			$\CE_j(p_j(f- f_j) p_j)= \CE_j( p_j(f- f_j) q_j + q_j(f- f_j) p_j) = 0$. 
		\end{enumerate}
	\end{prop}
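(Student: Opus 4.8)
The plan is to obtain each assertion directly from the formulas defining $g$, $b_d$, $b_{off}$, using only three facts: the trace‑preservation $\varphi\circ\CE_j=\varphi$ and the module property of the conditional expectations; the fact that $q:=\wedge_{n}q_n$ together with the projections $p_j=q_{j-1}-q_j$ form a mutually orthogonal family with $q+\sum_{j\geq1}p_j=1$; and the regularity of the dyadic filtration, i.e. $f_j=\CE_j(f)\leq 2^d\,\CE_{j-1}(f)=2^df_{j-1}$ for $f\geq0$, which holds because each level‑$j$ cube is contained in a level‑$(j-1)$ cube of $2^d$ times its volume.

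I would first dispatch the $L^1$ bound on $g$. Since $q$, $p_j$, $f$, $f_j$ are positive, $g\geq0$, so $\norm{g}_1=\varphi(g)=\varphi(qfq)+\sum_{j\geq1}\varphi(p_jf_jp_j)$, the series converging because its partial sums increase and have trace at most $\norm f_1$. As $p_j\in\CN_j$ (a difference of a projection in $\CN_{j-1}\subseteq\CN_j$ and one in $\CN_j$), the module property gives $p_jf_jp_j=p_j\CE_j(f)p_j=\CE_j(p_jfp_j)$, whence $\varphi(p_jf_jp_j)=\varphi(p_jfp_j)$. Writing each term as $\varphi(f^{1/2}e\,f^{1/2})$ and summing over the orthogonal family $\{q\}\cup\{p_j\}_{j\geq1}$, whose sum is $1$, gives $\norm{g}_1=\varphi\!\left(f^{1/2}\big(q+\textstyle\sum_jp_j\big)f^{1/2}\right)=\varphi(f)=\norm f_1$.

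For the $L^\infty$ bound I would handle the two summands of $g$ separately. By property (iii) above, $qfq\leq\lambda q\leq\lambda 1$. For $\sum_{j\geq1}p_jf_jp_j$, disjointness of the $p_j$ reduces the task to bounding each $\norm{p_jf_jp_j}_\infty$, and this is the only point of the proof with real content: the Cuculescu inequality $q_j\CE_j(f)q_j\leq\lambda q_j$ is useless here, since on the range of $p_j$ the operator $\CE_j(f)$ need not be comparable to $\lambda$. The remedy is to descend one level: $p_j\leq q_{j-1}$, so by $f_j\leq2^df_{j-1}$ and property (i) at level $j-1$ (which reads $q_{j-1}f_{j-1}q_{j-1}\leq\lambda q_{j-1}$, the case $j=1$ being $f_0\leq\lambda 1=\lambda q_0$),
\[
p_jf_jp_j\ \leq\ 2^d\,p_jf_{j-1}p_j\ =\ 2^d\,p_j(q_{j-1}f_{j-1}q_{j-1})p_j\ \leq\ 2^d\lambda\,p_j .
\]
Summing, $\sum_{j\geq1}p_jf_jp_j\leq2^d\lambda(1-q)$, and therefore $\norm{g}_\infty\leq(1+2^d)\lambda\lesssim\lambda$.

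Finally, for item (2), the $L^1$ estimate is a plain triangle inequality: $\norm{p_j(f-f_j)p_j}_1\leq\norm{p_jfp_j}_1+\norm{p_jf_jp_j}_1=\varphi(p_jfp_j)+\varphi(p_jf_jp_j)$, and summing over $j$ gives $\sum_j\varphi(p_jfp_j)=\varphi\!\left(f^{1/2}(1-q)f^{1/2}\right)\leq\norm f_1$ and likewise $\sum_j\varphi(p_jf_jp_j)\leq\norm f_1$ (again using $p_j\in\CN_j$), for a total of $2\norm f_1$. The vanishing of the conditional expectations is immediate from $p_j,q_j\in\CN_j$: by the module property $\CE_j(p_j(f-f_j)p_j)=p_j\CE_j(f-f_j)p_j=p_j(f_j-f_j)p_j=0$, and in the same way $\CE_j(p_j(f-f_j)q_j+q_j(f-f_j)p_j)=0$ because $\CE_j(f-f_j)=0$. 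Apart from the $L^\infty$ step, everything is bookkeeping with the trace and the $\CE_j$, and these identities are essentially those recorded in \cite{cadilhac2022spectral}; in the write‑up I would either cite them or reproduce the short computations above.
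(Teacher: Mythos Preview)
Your argument is correct. The paper does not actually prove this proposition; it simply records it and refers the reader to \cite{cadilhac2022spectral}, so your self-contained write-up goes further than the paper itself and reproduces the standard argument one finds there: the $L^1$ bound via $\varphi\circ\CE_j=\varphi$ and the partition $q+\sum_j p_j=1$, the $L^\infty$ bound via dyadic regularity $f_j\le 2^d f_{j-1}$ together with $p_j\le q_{j-1}$ and $q_{j-1}f_{j-1}q_{j-1}\le\lambda q_{j-1}$, and the cancellation via the module property of $\CE_j$ with $p_j,q_j\in\CN_j$.
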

	
	\noindent For $k \in \Z$, $Q \in \CQ_k$ and $x \in Q$, define $p_Q:= p_k(x)$ and 
	\begin{equation}\label{defn of zeta}
		\zeta:= \Big(\vee_{Q \in \CQ} p_Q \chi_{5Q}\Big)^\perp.
	\end{equation}
	
	We recall the following theorem.
	\begin{thm}\cite{galkazka2022sharp}\label{galkazka's estimate}
		Let $w$ be a weight satisfying the Muckenhoupt's $A_1$ condition and $f \in L^1_w(\CN)$. Then for any $\lambda>0$ we have $q \CE_n(f) q \leq \lambda$ for all $n$ and 
		\begin{align*}
			\lambda \varphi_w(1-q) \leq [w]_{A_1} \norm{f}_{1,w}.
		\end{align*}
	\end{thm}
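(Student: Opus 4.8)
The plan is to run Cuculescu's non-commutative stopping construction exactly as in the unweighted case; the only genuinely new difficulty is that the weight $w$ is not measurable with respect to the dyadic filtration $(\CN_n)_{n\in\Z}$, and this is circumvented by the ``dyadic $A_1$'' bound $\CE_n(w)\le[w]_{A_1}\,w$ together with the $\varphi$-symmetry of the conditional expectations $\CE_n$.

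The bound on $q$ is immediate: since $q=\wedge_m q_m\le q_n$ we have $q=qq_n=q_nq$, so the Cuculescu property $q_n\CE_n(f)q_n\le\lambda q_n$ gives, for $n\ge1$,
\begin{equation*}
q\CE_n(f)q=q\bigl(q_n\CE_n(f)q_n\bigr)q\le\lambda\,qq_nq=\lambda q\le\lambda ,
\end{equation*}
while for $n\le0$ one has $q_n=1$ and, under the normalisation $m_\lambda(f)=0$ fixed in Section \ref{prelim}, $\CE_n(f)\le\lambda$, so again $q\CE_n(f)q\le\lambda$.

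For the weak-type inequality I would use the other feature of the construction, namely that $q_n$ is a spectral projection of $q_{n-1}\CE_n(f)q_{n-1}$, so that the jump $p_n=q_{n-1}-q_n\le q_{n-1}$ records where that compressed average exceeds $\lambda$; concretely $\lambda\,p_n\le p_n\,\CE_n(f)\,p_n$ for $n\ge1$. Since $w$ is positive and commutes with every element of $\CN$, conjugating this inequality by $w^{1/2}$ and applying $\varphi$ (using traciality and $p_n^2=p_n$) gives $\lambda\,\varphi_w(p_n)\le\varphi\bigl(\CE_n(f)\,p_n w\bigr)$. Now, because $\CE_n$ is $\varphi$-preserving, hence $\varphi$-symmetric, $\varphi\bigl(\CE_n(f)\,Y\bigr)=\varphi\bigl(f\,\CE_n(Y)\bigr)$ for all $Y$; combined with the module identity $\CE_n(p_n w)=\CE_n(w)\,p_n$ (valid since $p_n\in\CN_n$ and $\CE_n(w)$ is central) this yields
\begin{equation*}
\lambda\,\varphi_w(p_n)\le\varphi\bigl(f\,\CE_n(w)\,p_n\bigr),\qquad n\ge1 .
\end{equation*}

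Summing over $n\ge1$ and using $\sum_{n\ge1}p_n=1-q$ we get $\lambda\,\varphi_w(1-q)\le\varphi\bigl(f\sum_{n\ge1}\CE_n(w)\,p_n\bigr)$. At this point the hypothesis on $w$ enters through the pointwise inequality $\CE_n(w)\le[w]_{A_1}\,w$, which is precisely $\tfrac{w(Q)}{\mu(Q)}\le[w]_{A_1}\inf_{x\in Q}w(x)$ for $Q\in\CQ_n$ (recorded in Section \ref{prelim}); since $\CE_n(w)$ is central this upgrades to $\CE_n(w)\,p_n\le[w]_{A_1}\,w\,p_n$, whence $\sum_{n\ge1}\CE_n(w)\,p_n\le[w]_{A_1}\,w\,(1-q)\le[w]_{A_1}\,w$. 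Feeding this back in and using $f\ge0$ gives $\lambda\,\varphi_w(1-q)\le[w]_{A_1}\,\varphi(fw)=[w]_{A_1}\,\norm{f}_{1,w}$. The step that needs the most care — and the only essential departure from the unweighted argument, where one would simply invoke $\varphi\bigl((1-q_N)\CE_N(f)\bigr)=\varphi\bigl((1-q_N)f\bigr)$ because $1-q_N$ is adapted — is passing $\CE_n$ onto the non-adapted weight $w$ and then absorbing the resulting $\CE_n(w)$ via the dyadic $A_1$ bound; the remaining ingredients (the Cuculescu projections and the inequality $\lambda p_n\le p_n\CE_n(f)p_n$) are standard and available in the references cited in Section \ref{prelim}.
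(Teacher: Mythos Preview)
The paper does not supply its own proof of this statement; it is quoted verbatim as a result from \cite{galkazka2022sharp}. Your argument is correct and self-contained: the Cuculescu bound $\lambda p_n\le p_n\CE_n(f)p_n$, the $\varphi$-symmetry identity $\varphi(\CE_n(f)\,p_nw)=\varphi(f\,p_n\CE_n(w))$, and the dyadic $A_1$ inequality $\CE_n(w)\le[w]_{A_1}w$ combine exactly as you describe. It is worth noting that the paper itself carries out the identical manipulation later, in the estimate for the good part, when bounding $\norm{g}_{L^1_w(\CN)}$: there the authors compute $\varphi_w\bigl(\sum_j p_j f_j p_j\bigr)=\varphi\bigl(\sum_j p_jf\,\CE_j(w)\bigr)\le[w]_{A_1}\varphi_w\bigl((1-q)f\bigr)$, which is precisely your key step read backwards. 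So your proof is not only correct but in the same spirit as the weighted computations appearing elsewhere in the paper.
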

	
	\noindent Now we prove the following important lemma.
	\begin{lem}\label{estimate of zeta perp}
		Let $w$ be an $A_1$-weight, $f \in L^1_w(\CN)_+$ and $\lambda>0$. Then we have 
		\begin{equation}
			\varphi_w (1 - \zeta) \lesssim [w]_{A_1}^2 \frac{\norm{f}_{1,w}}{\lambda}.
		\end{equation}
	\end{lem}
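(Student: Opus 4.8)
The plan is to estimate $\varphi_w(1-\zeta)$ by decomposing $1-\zeta = \vee_{Q \in \CQ} p_Q \chi_{5Q}$ according to the scale $k$ of the dyadic cube $Q$, and to control each scale using the $A_1$-property of $w$ together with the disjointness of the projections $p_k$. First I would observe that, since $\zeta^\perp = \vee_{Q\in\CQ} p_Q\chi_{5Q}$ is a supremum of projections, one has the crude bound $\varphi_w(1-\zeta) \le \sum_{k \ge 1} \varphi_w\bigl(\vee_{Q \in \CQ_k} p_Q \chi_{5Q}\bigr)$; and for fixed $k$, because the cubes in $\CQ_k$ are pairwise disjoint while $p_k(x)$ is a function of $x$, the projection $\vee_{Q\in\CQ_k} p_Q\chi_{5Q}$ is dominated by $\sum_{Q\in\CQ_k} (\text{something supported on }5Q)$. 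The key point is that $\varphi_w(p_Q\chi_{5Q}) = w(5Q)\cdot\tau(\text{range of }p_k\text{ on }Q)$-type quantity, and by the $A_1$ doubling property \eqref{weight prop}, $w(5Q) \le [w]_{A_1}^{c} \, w(Q)$ for a dimensional exponent (more precisely $\frac{w(5Q)}{\mu(5Q)} \le [w]_{A_1}\frac{w(Q)}{\mu(Q)}$, and $\mu(5Q) = 5^d\mu(Q)$, so $w(5Q) \le 5^d [w]_{A_1} w(Q)$). This reduces matters to estimating $\sum_{k\ge1} \varphi_w\bigl(\sum_{Q\in\CQ_k} p_Q\chi_Q\bigr)$, i.e. $\sum_k \varphi_w(p_k)$ computed against the weight $w$.

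Next I would relate $\sum_k p_k = 1-q$ to the Cuculescu construction and invoke Theorem \ref{galkazka's estimate}: since $p_n = q_{n-1}-q_n$ and the $p_n$ are disjoint with $\sum_{n} p_n = 1-q$, we get $\sum_{k\ge1}\varphi_w(p_k) = \varphi_w(1-q) \le [w]_{A_1}\frac{\norm{f}_{1,w}}{\lambda}$. The trouble is that passing from $p_Q \chi_{5Q}$ to $p_Q\chi_Q$ costs exactly one factor $w(5Q)/w(Q) \le 5^d[w]_{A_1}$ per cube, and this has to be done \emph{uniformly} before summing; the cleanest way is to write, for each $k$,
\begin{equation*}
	\varphi_w\Bigl(\bigvee_{Q\in\CQ_k} p_Q\chi_{5Q}\Bigr) \le \sum_{Q\in\CQ_k}\varphi_w(p_Q\chi_{5Q}) = \sum_{Q\in\CQ_k} w(5Q)\,\tau\bigl(p_k|_Q\bigr) \le 5^d[w]_{A_1}\sum_{Q\in\CQ_k} w(Q)\,\tau\bigl(p_k|_Q\bigr),
\end{equation*}
and the last sum is precisely $5^d[w]_{A_1}\,\varphi_w(p_k)$ once one unwinds that $p_k$ is constant on each $Q\in\CQ_k$ and $w$ commutes with $\CN$. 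Summing over $k\ge1$ and applying Theorem \ref{galkazka's estimate} yields
\begin{equation*}
	\varphi_w(1-\zeta) \le 5^d[w]_{A_1}\sum_{k\ge1}\varphi_w(p_k) = 5^d[w]_{A_1}\,\varphi_w(1-q) \le 5^d[w]_{A_1}^2\,\frac{\norm{f}_{1,w}}{\lambda},
\end{equation*}
which is the claimed bound with an explicit dimensional constant absorbed into $\lesssim$.

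The step I expect to be the main obstacle is the very first reduction: justifying rigorously that $\vee_{Q\in\CQ_k} p_Q\chi_{5Q} \le \sum_{Q\in\CQ_k} p_Q\chi_{5Q}$ in a way that survives the trace, since the cubes $5Q$ overlap (each point lies in boundedly many dilates $5Q$ with $Q\in\CQ_k$, a number depending only on $d$), so the supremum of these projections is only bounded by a \emph{dimensional multiple} of the sum — not the sum itself. I would handle this by the standard fact that a supremum of projections, each of trace-class-type size, is $\le$ their sum, and that the overlap of the $5Q$'s over $Q\in\CQ_k$ is at most $5^d$, giving $\varphi_w(\vee_Q p_Q\chi_{5Q}) \le \varphi_w(\sum_Q p_Q\chi_{5Q})$ directly (the sum being a positive operator, not necessarily a projection, but still $\ge$ the supremum projection). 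One must also be slightly careful that $p_k(x)$ as $x$ ranges over a fixed $Q\in\CQ_k$ is genuinely independent of $x$ — this is exactly because $\CE_k$, hence $q_k$, hence $p_k = q_{k-1}-q_k$, is $\CB_k$-measurable, so $p_k$ is a constant projection in $\CM$ on each $Q\in\CQ_k$. With these two observations in place the rest is the bookkeeping sketched above.
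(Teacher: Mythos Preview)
Your proof is correct and follows essentially the same route as the paper: bound $\varphi_w(1-\zeta)$ by $\sum_{Q\in\CQ}\varphi_w(p_Q\chi_{5Q})$, use the $A_1$-property \eqref{weight prop} to replace $5Q$ by $Q$ at the cost of $5^d[w]_{A_1}$, recognize $\sum_{Q\in\CQ}p_Q\chi_Q=\sum_{j\ge1}p_j=1-q$, and finish with Theorem~\ref{galkazka's estimate}. Your organization by scale $k$ is only a cosmetic difference. One remark: your worry about the overlap of the dilates $5Q$ is unnecessary --- the inequality $\bigvee_i e_i\le\sum_i e_i$ holds for \emph{any} family of projections regardless of overlap, so $\varphi_w(\vee_Q p_Q\chi_{5Q})\le\sum_Q\varphi_w(p_Q\chi_{5Q})$ needs no bounded-overlap argument.
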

	
	\begin{proof}
		Observe that 
		\begin{align*}
			\varphi_w (1 - \zeta)
			&\leq \sum_{Q \in \CQ} \varphi_w (p_Q \chi_{5Q})\\
			& \leq \sum_{Q \in \CQ} [w]_{A_1} \frac{\abs{5Q}}{\abs{Q}} \varphi_w (p_Q \chi_{Q}) ~ (\text{by eq } \ref{weight prop})  \\
			& \lesssim [w]_{A_1} \sum_{j \geq 1} \varphi_w(p_j), \text{ since } \sum_{Q \in \CQ} p_Q \chi_{Q} = \sum_{j \geq 1} p_j \\
			& = [w]_{A_1} \varphi_w (1-q) \\
			& \lesssim [w]_{A_1}^2 \frac{\norm{f}_{1,w}}{\lambda} ~ (\text{by Theorem } \ref{galkazka's estimate}).
		\end{align*}
		This completes the proof of the lemma.
	\end{proof}

	\section{Weak type $(1,1)$ estimate}\label{weaktypeesti}
	
	This section is devoted to the weak type $(1,1)$ estimate of the non-commutative square function. 

	Recall that for the measure space $(\R^d, \CB, \mu)$ and $f \in L^1(\R^d, \CB, \mu)$, the square function is formulated as
	\begin{align*}
		Lf(x):= \Big( \sum_{k} |(M_k - \E_k) f(x)|^2 \Big)^{\frac{1}{2}}, ~ x \in \mathbb{R}^d,
	\end{align*}
	where, $M_k$ is defined as in eq. \ref{avg op} just by replacing $\CM$ by $\C$.
	In \cite{jones2003oscillation}, the authors proved that the operator $L$ satisfies strong type $(p,p)$ for $1<p \leq 2$ and weak type $(1,1)$.
	
	Let us we consider a Rademacher sequence $(\epsilon_k)$ on some probability measure space $(\Omega, P)$ and construct a sequence of operators
	\begin{align*}
		T_kf(x):= (M_k- \CE_k)f (x),~ f \in L^1_w(\CN, \varphi).
	\end{align*}
	Observe that by \cite{cadilhac2019noncommutative},  for a finite sequence $(T_kf)$ we have
	\begin{align*}
		\norm{(T_k f)}_{L_w^{1, \infty}(\CN; \ell_2^{rc})} \simeq \norm{\sum_k \epsilon_k T_k f}_{L_w^{1, \infty}(L^\infty(\Omega) \otimes \CN)}
	\end{align*}
	
	\noindent To estimate the weak type $(1,1)$ bounds for the operator
	\begin{align}\label{linearization}
		Tf(x):= \sum_k \epsilon_k T_kf (x), ~ f \in L^1_w(\CN, \varphi),
	\end{align}
	we first observe the following. \noindent Let $f \in L^1_w(\CN)_+$. Then we have that
	\begin{equation}
		\tilde{\varphi}_w(\abs{Tf}> \lambda) \leq \tilde{\varphi}_w(\abs{Tg}> \lambda/3) + \tilde{\varphi}_w(\abs{Tb_d}> \lambda/3) + 
		\tilde{\varphi}_w(\abs{Tb_{off}}> \lambda/3),
	\end{equation}
	where, $\tilde{\varphi}_w:= \int_\Omega \otimes \varphi_w$.
	So to prove the weak type $(1,1)$ inequality for the non-commutative square function $T$, it is enough to prove the following theorem.
	
	\begin{thm}\label{main thm}
		Let  $h \in \{g, b_d, b_{off}\}$ and $\lambda>0$. Then there exists a constant $C_1(w)>0$ such that 
		\begin{equation}
			\lambda \tilde{\varphi}_w ( \abs{Th}> \lambda) \lesssim C_1(w) \norm{f}_{1,w}.
		\end{equation}
	\end{thm}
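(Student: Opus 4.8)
The plan is to treat the three cases $h\in\{g,b_d,b_{off}\}$ separately, all organized around the projection $\zeta$ of \eqref{defn of zeta}. For $h\in\{b_d,b_{off}\}$ I would start from the elementary reduction: writing $Th=\zeta(Th)+(1-\zeta)(Th)$ and noting that $(1-\zeta)(Th)$ has left support majorized by $1-\zeta$, while the distribution function is unchanged by $x\mapsto x^{*}$ and by unitary conjugation, one gets, up to harmless numerical constants,
\begin{equation*}
	\tilde{\varphi}_w\big(|Th|>\lambda\big)\ \lesssim\ \varphi_w(1-\zeta)+\tilde{\varphi}_w\big(|\zeta\,Th|>\lambda\big)\ \le\ \varphi_w(1-\zeta)+\tfrac{1}{\lambda}\,\|\zeta\,Th\|_{1,\tilde{\varphi}_w}.
\end{equation*}
By Lemma \ref{estimate of zeta perp}, $\lambda\varphi_w(1-\zeta)\lesssim[w]_{A_1}^{2}\|f\|_{1,w}$, so the task for $b_d$ and $b_{off}$ reduces to the weighted $L^{1}$-bound $\|\zeta\,Th\|_{1,\tilde{\varphi}_w}\lesssim C_1(w)\|f\|_{1,w}$; the good part $g$ I would handle instead by an $L^{2}$ argument. (Recall $T=\sum_k\epsilon_kT_k$ is real-linear and commutes with the adjoint.)

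For $h=g$: since $L^{2}(\CM)$ is a Hilbert space, $T$ is bounded on $L^{2}_w(\CN)$ with some constant $C_2(w)$ (the $p=2$ case, cf.\ \cite{krause2018weighted}), and by orthogonality of the Rademacher system $\|Tg\|_{2,\tilde{\varphi}_w}^{2}=\sum_k\|T_kg\|_{2,w}^{2}\le C_2(w)^{2}\|g\|_{2,w}^{2}$. Chebyshev then gives $\lambda\,\tilde{\varphi}_w(|Tg|>\lambda)\le\tfrac1\lambda\|Tg\|_{2,\tilde{\varphi}_w}^{2}\le\tfrac{C_2(w)^2}{\lambda}\varphi_w(g^{2})$, and since $g\ge0$ with $\|g\|_\infty\lesssim\lambda$ we have $\varphi_w(g^2)\le\|g\|_\infty\varphi_w(g)\lesssim\lambda\|g\|_{1,w}$. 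It remains to see $\|g\|_{1,w}=\varphi_w(qfq)+\sum_{j\ge1}\varphi_w(p_jf_jp_j)\lesssim[w]_{A_1}\|f\|_{1,w}$: for the martingale part, restrict to each $Q\in\CQ_j$, compare $\tfrac{w(Q)}{|Q|}$ with $\inf_Q w$ using \eqref{weight prop}, and sum using disjointness of the $p_j$ with $\sum_jp_j=1-q$. Altogether $\lambda\,\tilde{\varphi}_w(|Tg|>\lambda)\lesssim C_2(w)^{2}[w]_{A_1}\|f\|_{1,w}$.

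For $h=b_d$: I would decompose $b_d=\sum_{j\ge1}\sum_{Q}b_{d,j,Q}$, $b_{d,j,Q}:=p_Q(f-f_j)p_Q\chi_Q$, the inner sum over bad cubes $Q\in\CQ_j$ (those with $p_Q=p_j|_Q\neq0$); each $b_{d,j,Q}$ is supported in $Q$ as a function on $\R^{d}$ and has $\int_Q b_{d,j,Q}=0$. Two geometric facts localize $T_k=M_k-\CE_k$. First, for $k\ge j$ both $M_kb_{d,j,Q}$ and $\CE_kb_{d,j,Q}$ are supported in $5Q$ and of the form $(p_Q\chi_{5Q})z(p_Q\chi_{5Q})$, whence $\zeta\,T_kb_{d,j,Q}=0$ because $\zeta\,(p_Q\chi_{5Q})=0$. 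Second, for $k<j$ dyadic nesting plus $\int_Qb_{d,j,Q}=0$ give $\CE_kb_{d,j,Q}=0$, while $M_kb_{d,j,Q}(x)=-|B_k|^{-1}\int_{Q\setminus B_k(x)}b_{d,j,Q}$ vanishes off the thin annulus $A_{k,Q}:=\{x:B_k(x)\cap Q\neq\emptyset,\ Q\not\subseteq B_k(x)\}$, with $|A_{k,Q}|\lesssim2^{k-j}|B_k|$. On $A_{k,Q}$, $\tau(|M_kb_{d,j,Q}(x)|)\le|B_k|^{-1}\int_Q\tau(|b_{d,j,Q}|)$; bounding $\int_Q\tau(|b_{d,j,Q}|)\le[w]_{A_1}\tfrac{|Q|}{w(Q)}\|b_{d,j,Q}\|_{1,w}$ and $\tfrac{w(A_{k,Q})}{|B_k|}\lesssim[w]_{A_1}\tfrac{w(Q)}{|Q|}\big(\tfrac{|A_{k,Q}|}{|B_k|}\big)^{\delta}$ (by \eqref{weight prop} on the cube of side $\sim2^{-k}$ containing $A_{k,Q}\cup Q$, plus the $A_1$ quantitative-doubling exponent $\delta=\delta([w]_{A_1})$) yields $\|M_kb_{d,j,Q}\|_{1,w}\lesssim C(w)2^{(k-j)\delta}\|b_{d,j,Q}\|_{1,w}$. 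Summing the geometric series in $k<j$, then over $Q$ (disjoint supports) and $j$, with $\sum_j\|p_j(f-f_j)p_j\|_{1,w}\lesssim[w]_{A_1}\|f\|_{1,w}$ (weighted form of the Calder\'on--Zygmund bound), gives $\|\zeta\,Tb_d\|_{1,\tilde{\varphi}_w}\le\sum_{j,Q}\sum_{k<j}\|M_kb_{d,j,Q}\|_{1,w}\lesssim C(w)\|f\|_{1,w}$.

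For $h=b_{off}$ — the main obstacle: by self-adjointness $b_{off}=b_r+b_r^{*}$, $b_r=\sum_{j\ge1}p_j(f-f_j)q_j$, and since $|x|,|x^{*}|$ are equidistributed it is enough to bound $Tb_r$. Decomposing $b_r=\sum_{j,Q}p_Q(f-f_j)q_j\chi_Q$ over bad cubes, the left factor $p_Q$ again forces $\zeta\,T_kb_{r,j,Q}=0$ for $k\ge j$ and $\int_Q(f-f_j)=0$ again kills $\CE_kb_{r,j,Q}$ for $k<j$, so $\zeta\,Tb_r=\sum_{j,Q}\sum_{k<j}\epsilon_k\,\zeta M_kb_{r,j,Q}$ with each $M_kb_{r,j,Q}$ supported on $A_{k,Q}$. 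The difference from $b_d$ — and where the work is — is that the right factors $q_j$ are not spatially localized, so the termwise $L^{1}$ estimate above is unavailable (and a crude version diverges, since $\sum_j\chi_{\{p_j\neq0\}}$ need not be bounded). Instead I would keep the pairwise disjointness of the $p_Q$'s, pass through the non-commutative Khintchine inequality in $L^{1}$ to a row/column square-function norm of $(\zeta M_kb_{r,j,Q})$, and control the resulting martingale-type expression in $((f-f_j)q_j)_j$ by the weighted Doob maximal inequality of \cite{galkazka2022sharp}; the annulus bound $|A_{k,Q}|\lesssim2^{k-j}|B_k|$ with the $A_1$-doubling still provides the geometric decay in $k$. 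The delicate parts are carrying the explicit $[w]_{A_1}$-dependence through the quantitative $A_1$-doubling (both its exponent and its constant depend on $[w]_{A_1}$) and interfacing the row/column structure with the weight; this is exactly where the argument departs from the unweighted one of \cite{hong2021noncommutative}. Assembling the three cases finishes the proof.
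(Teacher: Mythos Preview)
Your handling of $g$ matches the paper's (weighted $L^2$ bound from \cite{krause2018weighted}, Chebyshev, $\|g\|_\infty\lesssim\lambda$, then $\|g\|_{1,w}\lesssim[w]_{A_1}\|f\|_{1,w}$). For $b_d$ your $L^1$ route---kill $k\ge j$ via $\zeta\,p_Q\chi_{5Q}=0$, bound each $\|M_kb_{d,j,Q}\|_{1,w}$ by the annulus/$A_\infty$ estimate, sum the geometric series in $k$, then sum $\sum_j\|p_j(f-f_j)p_j\|_{1,w}\lesssim[w]_{A_1}\|f\|_{1,w}$---is correct but differs from the paper, which instead controls $\|\zeta Tb_d\zeta\|_{2,w}^2$ via the almost-orthogonality Lemma~\ref{almost orthogonality}, the weighted $M_{k,n}$-estimate of Lemma~\ref{main lemma}, and the Cuculescu/doubling bound $p_nf_np_n\le 2^d\lambda p_n$. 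Your approach is more elementary here; the paper's stays closer to \cite{hong2021noncommutative}.

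The genuine gap is in $b_{off}$. You correctly diagnose that the right factor $q_j$ is not localized, so the termwise $L^1$ bound used for $b_d$ is unavailable, but your proposed fix---Khintchine in $L^1$ to pass to a row/column norm, then the weighted Doob inequality of \cite{galkazka2022sharp}---does not supply the missing estimate. Doob only controls $\varphi_w(1-q)$, which you have already spent; it gives no handle on the size of $p_nfq_n$ over a sub-region of a $\CQ_n$-atom, and there is no evident mechanism to dominate a column/row square function of $(\zeta M_kb_{r,j,Q})_{k,j,Q}$ by a maximal-function bound. The paper's actual tool is different and specific: Proposition~\ref{Cadilhac prop} (Cadilhac--Wang \cite[Lemma~4.4]{cadilhac2022noncommutative}), which says that for $K$ contained in a union $E$ of $\CQ_n$-atoms,
\[
\Bigl\|\int_K p_nfq_n\Bigr\|_{L^1(\CM)}\ \le\ 2\lambda\,\varphi(\chi_E\,p_nf).
\]
Applied with $K=(B_k+x)\cap\partial_{\CQ_n}(B_k+x)$ and $E=\partial_{\CQ_n}(B_k+x)$, this yields $\|M_kb_n^{off}(x)\|_{L^1(\CM)}\le\frac{2\lambda}{|B_k|}\varphi(\chi_{\partial_{\CQ_n}(B_k+x)}p_nf)$ directly; one then integrates in $x$ against $w$, uses $\partial_{\CQ_n}(B_k+x)\subseteq\partial_{B_n}(B_k)+x$ and the same $A_1$/$A_\infty$ computation as in your annulus argument to get $\|M_kb_n^{off}\|_{1,w}\lesssim C(w)\lambda\,2^{(k-n)\delta}\varphi_w(p_nf)$, which sums over $k<n$ and then over $n$ to $C(w)\lambda\,\varphi_w((1-q)f)\le C(w)\lambda\|f\|_{1,w}$. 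The Cuculescu structure, through this lemma, is precisely what manufactures the factor $\lambda$ replacing the uncontrolled $\|p_nfq_n\|_1$; that is the ingredient your sketch is missing.
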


	Let $n \in \Z$ and $B \subset \R^d$ be an Euclidean ball, and let $\partial B$ denote the boundary of the ball. Define the following set
	\begin{equation}
		\CI (B,n)= \cup_{Q \in \CQ_n, ~ \partial B \cap Q \neq \phi}  ~Q \cap B.
	\end{equation}
	Furthermore, for integer  $k<n$ and $x \in \R^d$ define,
	\begin{equation}
		M_{k,n} h(x):= \frac{1}{\abs{B_k}} \int_{\CI(B_k+x,n)} h(y) dy, ~ h \in L^p_w(\CN)_+.
	\end{equation}
	
	\noindent Now we prove the following lemma which is crucial for our case.
	
	\begin{lem}\label{main lemma}
		Let $1 \leq p \leq \infty$. Then for $h \in L^p_w(\CN)$ and $k < n$ we have 
		\begin{align*}
			\norm{M_{k,n} h}^p_{p, w} \lesssim C(w) [w]_{A_1} 2^{(k-n)\delta} \norm{h}^p_{p, w}.
		\end{align*}
		Moreover, if $h \in L^p_w(\CN)_+$, then 
		\begin{align*}
			\norm{M_{k,n} h}^p_{p, w} \lesssim C(w) [w]_{A_1} 2^{(k-n)\delta} \norm{\CE_n(h)}^p_{p, w}.
		\end{align*}
	\end{lem}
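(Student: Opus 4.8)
The plan is to estimate the operator $M_{k,n}$ by comparing the set $\CI(B_k+x,n)$ to a manageable fraction of the ball $B_k+x$, and then to pass through the weighted maximal inequality. First I would observe the elementary geometric fact that, since $\partial(B_k+x)$ is the boundary of a Euclidean ball of radius $2^{-k}$, the dyadic cubes in $\CQ_n$ (side length $2^{-n}$, with $k<n$) that meet $\partial(B_k+x)$ are contained in an annular neighbourhood of the sphere of width $\lesssim 2^{-n}$; hence $\mu(\CI(B_k+x,n)) \lesssim 2^{-n}\cdot 2^{-k(d-1)} = 2^{(k-n)}\,\mu(B_k)$, i.e. $\CI(B_k+x,n)$ occupies only a $2^{k-n}$-fraction of $B_k+x$ by Lebesgue measure. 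This is the source of the decay factor $2^{(k-n)\delta}$: combined with the $A_1$ reverse-type estimate
\begin{equation*}
	\frac{w(S)}{w(Q)} \leq C(w)\Big(\frac{\mu(S)}{\mu(Q)}\Big)^\delta, \qquad S \subseteq Q,
\end{equation*}
stated in the preliminaries, we get $w(\CI(B_k+x,n)) \lesssim C(w)\, 2^{(k-n)\delta}\, w(B_k+x)$.

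Next I would reduce the $L^p_w$ bound on $M_{k,n}h$ to the classical weighted maximal inequality of Theorem \ref{weighted max ineq classical}, exploiting that $w\otimes I_\CM$ commutes with everything in $\CN$ and that the operator acts only in the spatial variable. Writing $\abs{h}$ in the operator sense and using the integral triangle inequality (or operator convexity, for the positivity statement), one bounds $\abs{M_{k,n}h(x)}$ pointwise by $\frac{1}{\abs{B_k}}\int_{\CI(B_k+x,n)}\abs{h(y)}\,dy$. The key manipulation is to insert the weight: by Hölder in the form $\frac{1}{\abs{B_k}}\int_{\CI}\abs{h} \le \big(\frac{w(\CI)}{\mu(B_k)}\big)^{1-1/p}\cdot\big(\frac{1}{w(\CI)}\int_{\CI}\abs{h}^p\,w\big)^{1/p}\cdot(\text{correction})$ — more cleanly, split $\abs{h} = (\abs{h}w^{1/p})\cdot w^{-1/p}$ and apply Hölder with exponents $p$ and $p'$ on $\CI(B_k+x,n)$, producing a factor $\big(\int_{\CI} w^{-p'/p}\big)^{1/p'}$. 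Using the $A_1$ condition to control $w^{-1/p}$ by $\big(\inf_Q w\big)^{-1/p} \lesssim \big([w]_{A_1}\mu(Q)/w(Q)\big)^{1/p}$ on the relevant cubes, together with the measure bound above, one converts everything into the weighted Hardy–Littlewood maximal function of $\abs{h}^p w$ against the measure $w\,d\mu$, at which point Theorem \ref{weighted max ineq classical} (strong type, since we're away from the endpoint after the $\ell^p$-averaging) closes the estimate; tracking constants gives the $C(w)[w]_{A_1}$ prefactor. For $p=\infty$ the argument degenerates to the trivial bound $\norm{M_{k,n}h}_\infty \le \frac{\mu(\CI)}{\mu(B_k)}\norm{h}_\infty$, which already carries the factor $2^{k-n}$.

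For the improved bound in the positive case, I would use that $\CI(B_k+x,n)$ is a union of cubes $Q\in\CQ_n$ (intersected with the ball), so on each such $Q$ one can replace $h$ by its conditional expectation: $\int_{Q\cap B}h \le \int_Q h = \abs{Q}\,\CE_n(h)$ on $Q$. Summing and using $h\ge 0$ gives $M_{k,n}h(x) \le \frac{1}{\abs{B_k}}\int_{\CI(B_k+x,n)}\CE_n(h)(y)\,dy = M_{k,n}(\CE_n(h))(x)$, and then the first part applied to $\CE_n(h)$ in place of $h$ yields the claim. The main obstacle I anticipate is the bookkeeping in the weighted Hölder step: making the passage from $\frac{1}{\mu(B_k)}\int_{\CI}\abs{h}$ to a maximal function of $\abs{h}^p w$ \emph{with the correct power $2^{(k-n)\delta}$} requires using the $A_1$ quasi-doubling estimate on the right cubes (the ones of side $2^{-n}$ making up $\CI$, not the ball $B_k$), and one must be careful that the non-commutative nature of $h$ does not interfere — but since $w$ is a scalar weight commuting with $\CN$, each scalar inequality applied to the spatial integrand transfers verbatim, so this is a technical rather than conceptual difficulty.
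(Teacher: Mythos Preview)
Your geometric observation that $\mu(\CI(B_k+x,n))\lesssim 2^{k-n}\mu(B_k)$ is correct and is exactly what drives the decay, and your reduction in the positive case to $\CE_n(h)$ via $\int_{Q\cap B}h\le\int_Q h$ matches the paper. The problem is the middle step.

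Your plan is to apply a \emph{weighted} H\"older (splitting $\abs{h}=\abs{h}w^{1/p}\cdot w^{-1/p}$), control the factor $(\int_\CI w^{-p'/p})^{1/p'}$ via $A_1$, and land on a weighted Hardy--Littlewood maximal function to which Theorem~\ref{weighted max ineq classical} applies. Carry this out: after H\"older and $A_1$ you obtain, up to constants,
\[
\norm{M_{k,n}h(x)}_p^p \;\lesssim\; [w]_{A_1}\,2^{(k-n)(p-1)}\;\frac{1}{w(B_k+x)}\int_{B_k+x}\norm{h(y)}_p^p\,w(y)\,dy,
\]
i.e.\ a constant times $M_w\bigl(\norm{h(\cdot)}_p^p\bigr)(x)$. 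Integrating in $x$ against $w$ now asks for the \emph{strong $L^1(w)$} bound $\int M_w(g)\,w\lesssim\int g\,w$, which is false: the maximal operator is only weak type at this endpoint. Your remark that ``we're away from the endpoint after the $\ell^p$-averaging'' is backwards --- raising to the $p$-th power puts $\norm{h(\cdot)}_p^p$ in $L^1(w)$, which is precisely the endpoint. Note also that the decay factor you produce, $2^{(k-n)(p-1)}$, degenerates to $1$ at $p=1$, whereas the lemma claims $2^{(k-n)\delta}$ with $\delta=\delta(w)>0$ uniform in $p$.

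The paper avoids the maximal function entirely. It applies \emph{unweighted} H\"older on $\CI$ to get $\norm{M_{k,n}h(x)}_p^p\le \abs{\CI}^{p/p'}\abs{B_k}^{-p}\int_\CI\norm{h(y)}_p^p\,dy$, then integrates over $x$ and uses Fubini to pass to the dual set $J_{n,y}:=\{x:\,y\in\CI(B_k+x,n)\}$, arriving at $\int_{\R^d}\norm{h(y)}_p^p\,w(J_{n,y})\,dy$. The set $J_{n,y}$ has Lebesgue measure $\lesssim 2^{-n}2^{-k(d-1)}$ and sits in a cube $Q_y$ of side $2^{-k+1}$ centred at $y$; now the $A_\infty$ estimate gives $w(J_{n,y})\lesssim C(w)(\abs{J_{n,y}}/\abs{Q_y})^\delta w(Q_y)$ and the $A_1$ condition gives $w(Q_y)\le[w]_{A_1}\abs{Q_y}w(y)$, which produces exactly $C(w)[w]_{A_1}2^{(k-n)\delta}\norm{h}_{p,w}^p$ with no appeal to any maximal inequality. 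The missing idea in your proposal is this Fubini step: the $A_\infty$ gain must be harvested on the $x$-integral (via $J_{n,y}$), not on the $y$-integral.
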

	
	\begin{proof}
		Note that 
		\begin{align*}
			\norm{M_{k,n} h}_{p, w}^p = \int_{\R^d}  \norm{M_{k,n} h(x)}_p^p w(x) dx.
		\end{align*}
		Furthermore,
		\begin{align*}
			\norm{M_{k,n} h(x)}_p 
			&\leq \frac{1}{\abs{B_k}} \int_{\CI(B_k+x,n)} \norm{h(y)}_p dy\\
			&\leq \frac{\abs{\CI(B_k+x,n)}^{1/ p'}}{\abs{B_k}} \Big( \int_{\CI(B_k+x,n)} \norm{h(y)}^p_p dy \Big)^{1/p}
		\end{align*}
		where, $\frac{1}{p} + \frac{1}{p'} =1$. Hence,
		\begin{align*}
			\norm{M_{k,n} h}_{p, w}^p
			\leq \frac{(2^{-n} 2^{(d-1)(-k)})^{p/p'}}{(2^{-kd})^p} \int_{\R^d} \Big(\int_{\CI(B_k+x,n)} \norm{h(y)}^p_p dy\Big) w(x) dx \\
			= \frac{(2^{-n} 2^{(d-1)(-k)})^{p/p'}}{(2^{-kd})^p} \int_{\R^d} \norm{h(y)}^p_p \Big(\int_{J_{n,y}}  w(x) dx \Big) dy,
		\end{align*}
		where, $J_{n,y}:= \{x \in \R^d : y \in \CI(B_k+x,n)\}$. Also note that $\abs{J_{n,y}} \leq 2^{-n} 2^{(d-1)(-k)}$. Let $Q_y$ be the cube centred at $y$ of side length $2^{-k+1}$ containing $J_{n,y}$. Therefore,
		\begin{align*}
			\norm{M_{k,n} h}_{p, w}^p
			&\leq \frac{(2^{-n} 2^{(d-1)(-k)})^{p/p'}}{(2^{-kd})^p} \int_{\R^d} \norm{h(y)}^p_p w(J_{n,y}) dy \\
			& \lesssim C(w) \frac{(2^{-n} 2^{(d-1)(-k)})^{p/p'}}{(2^{-kd})^p} \int_{\R^d} \norm{h(y)}^p_p w(Q_y) \left( \frac{\abs{J_{n,y}}}{\abs{Q_y}}\right)^\delta dy\\
			& \leq C(w) [w]_{A_1} \frac{(2^{-n} 2^{(d-1)(-k)})^{p/p'}}{(2^{-kd})^p} \int_{\R^d} \norm{h(y)}^p_p \frac{\abs{J_{n,y}}^\delta}{\abs{Q_y}^{\delta-1}} w(y) dy\\
			& \leq  C(w) [w]_{A_1} \frac{(2^{-n} 2^{(d-1)(-k)})^{p/p'}}{(2^{-kd})^p}  2^{-n} 2^{(d-1)(-k)} \int_{\R^d} \norm{h(y)}^p_p \left(\frac{\abs{J_{n,y}}}{\abs{Q_y}} \right)^{\delta-1} w(y) dy \\
			&= C(w) [w]_{A_1} 2^{(k-n)p} 2^{(k-n)(\delta-1)} 2^{d(1-\delta)} \norm{h}^p_{p,w}.\\
			&\lesssim C(w) [w]_{A_1} 2^{(k-n)(p+\delta-1)} \norm{h}^p_{p,w}\\
			&\lesssim C(w) [w]_{A_1} 2^{(k-n)\delta} \norm{h}^p_{p,w}.
		\end{align*}
		Moreover, if $h \in L^p_w(\CN)_+$, then observe that 
		\begin{align*}
			M_{k,n} h \leq \frac{1}{\abs{B_k}} \sum_{ Q \in \CQ_n, \partial (B_k +x) \cap Q \neq \phi } \int_{Q} \CE_n(h)(y) dy.
		\end{align*}
		Hence, a similar argument as above we conclude that 
		\begin{align*}
			\norm{M_{k,n} h}^p_{p, w} \lesssim C(w) [w]_{A_1} 2^{(k-n)\delta} \norm{\CE_n(h)}^p_{p, w}.
		\end{align*}
	\end{proof}
	
	\subsection*{Estimate for the bad part}
	
	\subsubsection*{\underline{Estimate for $Tb_d$}} Recall the definition of $\zeta$ from Eq. \ref{defn of zeta} and observe that
	\begin{equation}
		Tb_d = (1-\zeta) Tb_d (1-\zeta) + \zeta Tb_d (1-\zeta) + (1-\zeta) Tb_d \zeta + \zeta Tb_d \zeta.
	\end{equation}
	
	\noindent Therefore, we must have 
	\begin{align*}
		\tilde{\varphi}_w(\abs{Tb_d}> \lambda/3) 
		&\lesssim \varphi_w (1 - \zeta) + \tilde{\varphi}_w ( \abs{\zeta Tb_d \zeta}> \lambda/9)\\
		&\lesssim [w]_{A_1}^2\frac{\norm{f}_{1,w}}{\lambda} + \tilde{\varphi}_w ( \abs{\zeta Tb_d \zeta}> \lambda/9)~ (\text{by Lemma } \ref{estimate of zeta perp}).
	\end{align*}

	Let us first recall the almost orthogonality lemma from \cite{jones2003oscillation}, \cite{hong2017vector} which is crucial for our purpose.
	
	\begin{lem}\label{almost orthogonality}
		Let for each $k$, $S_k$ be a bounded operator on $L^2$ and $h \in L^2$. If $(u_n)_{n}$ and $(v_n)_n$ be two sequences in $L^2$ such that $h= \sum_n u_n$ and $\sum_n \norm{v_n}_2^2< \infty$, then 
		\begin{align*}
			\sum_k \norm{S_kh}_2^2 \leq \kappa^2 \sum_n \norm{v_n}_2^2
		\end{align*}
		provided that there is a sequence $(\kappa(j))_j$ of positive numbers with $\kappa= \sum_j \kappa(j) < \infty$ such that 
		\begin{align*}
			\norm{S_k(u_n)}_2 \leq \kappa(n-k) \norm{v_n}_2
		\end{align*}
		for all $n,k$.
	\end{lem}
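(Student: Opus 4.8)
The plan is to treat this as a discrete Cotlar--Stein / Schur-test estimate, which at bottom is just the discrete Young inequality $\norm{\kappa * a}_{\ell^2} \leq \norm{\kappa}_{\ell^1}\norm{a}_{\ell^2}$ in disguise. First I would use the hypothesis $h = \sum_n u_n$ together with the boundedness, hence continuity, of each $S_k$ on $L^2$ to write $S_k h = \sum_n S_k u_n$, so that by the triangle inequality and the assumed pointwise bound $\norm{S_k u_n}_2 \leq \kappa(n-k)\norm{v_n}_2$,
\[
\norm{S_k h}_2 \leq \sum_n \norm{S_k u_n}_2 \leq \sum_n \kappa(n-k)\,\norm{v_n}_2 .
\]
Before squaring, I would record that this series converges: since every $\kappa(j)$ is non-negative with $\sum_j \kappa(j)=\kappa<\infty$ we have $0\le\kappa(j)\le\kappa$ for all $j$, so Cauchy--Schwarz gives $\sum_n \kappa(n-k)\norm{v_n}_2 \leq \kappa^{1/2}\big(\sum_n \kappa(n-k)\norm{v_n}_2^2\big)^{1/2} \leq \kappa\,\big(\sum_n \norm{v_n}_2^2\big)^{1/2}<\infty$.

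Next I would square both sides and sum over $k$, obtaining $\sum_k \norm{S_k h}_2^2 \leq \sum_k \big( \sum_n \kappa(n-k)\norm{v_n}_2 \big)^2$. The key step is to apply Cauchy--Schwarz to the inner sum while distributing the weight $\kappa(n-k)$ between the two factors:
\[
\Big( \sum_n \kappa(n-k)\,\norm{v_n}_2 \Big)^{2} \leq \Big( \sum_n \kappa(n-k) \Big)\Big( \sum_n \kappa(n-k)\,\norm{v_n}_2^{2} \Big) = \kappa \sum_n \kappa(n-k)\,\norm{v_n}_2^{2},
\]
where in the last equality I used that for fixed $k$ the index $n-k$ runs over all of $\Z$, so $\sum_n \kappa(n-k)=\sum_j\kappa(j)=\kappa$.

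Finally, since all the terms involved are non-negative, Tonelli's theorem licenses interchanging the order of summation, and using $\sum_k \kappa(n-k)=\kappa$ once more,
\[
\sum_k \norm{S_k h}_2^2 \leq \kappa \sum_k \sum_n \kappa(n-k)\,\norm{v_n}_2^{2} = \kappa \sum_n \norm{v_n}_2^{2}\sum_k \kappa(n-k) = \kappa^{2}\sum_n \norm{v_n}_2^{2},
\]
which is exactly the claimed bound. I do not expect a genuine obstacle here; the argument is short and self-contained. The only points that deserve a line of care are the justification that $S_k$ may be moved inside the convergent series $\sum_n u_n$ (continuity of $S_k$), and the absolute convergence (uniform in $k$) that makes the rearrangement of the double sum over $(n,k)$ legitimate — both of which follow from the estimates recorded in the first paragraph.
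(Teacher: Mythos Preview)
Your proof is correct; it is the standard Schur-test/Young's inequality argument for this almost orthogonality lemma. The paper itself does not prove the lemma but merely recalls it from \cite{jones2003oscillation} and \cite{hong2017vector}, so there is no in-paper proof to compare against.
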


	\begin{prop}
		The following is true.
		\begin{equation}
			\tilde{\varphi}_w ( \abs{\zeta Tb_d \zeta}> \lambda/9) \lesssim C(w) [w]_{A_1}^2 \frac{\norm{f}_{1,w}}{\lambda}.
		\end{equation}
	\end{prop}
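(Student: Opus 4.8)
The plan is to bound $\tilde\varphi_w(|\zeta Tb_d\zeta| > \lambda/9)$ by the Chebyshev inequality at the level of $L^2$, exploiting that $b_d = \sum_{j\geq 1} p_j(f-f_j)p_j$ is, after conjugating by $\zeta$, effectively supported away from the dyadic boundaries where $M_k$ can differ badly from $\CE_k$. First I would use $\tilde\varphi_w(|\zeta Tb_d\zeta|>\lambda/9) \leq \frac{81}{\lambda^2}\|\zeta Tb_d\zeta\|_{L^2_w}^2$, and then pass from the weighted $L^2$ norm to a pointwise/fibrewise estimate. Since $w$ commutes with everything in $\CN$ and with $\zeta$, we can write $\|\zeta Tb_d\zeta\|_{2,w}^2 = \tilde\varphi_w(|\zeta Tb_d\zeta|^2) = \sum_k \int_\Omega \varphi_w(|\zeta \epsilon_k T_k b_d \zeta|^2)$, and after integrating out the Rademacher variables (orthogonality of $(\epsilon_k)$) this becomes $\sum_k \varphi_w(\zeta T_k(b_d)^*\zeta T_k(b_d)\zeta)$, reducing matters to a square-function estimate $\sum_k \|\zeta T_k(b_d)\zeta\|_{2,w}^2 \lesssim$ (something) $\cdot \lambda\|f\|_{1,w}$.

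Next, I would decompose $b_d = \sum_{j\geq 1} b_d^{(j)}$ with $b_d^{(j)} = p_j(f-f_j)p_j$, and set things up to apply the almost-orthogonality Lemma (Lemma \ref{almost orthogonality}). The key geometric input is that, for a cube $Q\in\CQ_j$ and $x\in Q$, the operator $p_Q\chi_Q$ appears in $\zeta^\perp$, so $\zeta b_d^{(j)}\zeta$ only "sees" $b_d^{(j)}$ away from the cubes $5Q$; this is exactly what makes $T_k = M_k - \CE_k$ behave well: for $k \leq j$ the averaging ball $B_k + x$ is large compared to $Q$ and $M_k b_d^{(j)}$ resembles $\CE_k b_d^{(j)}$ except on the set $\CI(B_k+x, j)$ near $\partial(B_k+x)$, which is precisely where Lemma \ref{main lemma} gives the gain $2^{(k-j)\delta}$; while for $k > j$, one uses $\CE_j(b_d^{(j)}) = 0$ together with $M_k, \CE_k$ both being (roughly) conditional expectations refining past level $j$, which annihilate $b_d^{(j)}$ or control it with a comparable decaying factor. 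So I would verify the hypothesis $\|S_k(u_n)\|_2 \leq \kappa(n-k)\|v_n\|_2$ of Lemma \ref{almost orthogonality} with $S_k = \zeta T_k(\cdot)\zeta$ applied fibrewise, $u_n = b_d^{(n)}$, $v_n$ chosen so that $\sum_n \|v_n\|_{2,w}^2 \lesssim \lambda \|f\|_{1,w}$ — the natural choice being $v_n$ built from $p_n(f-f_n)p_n$ with the $L^1$-bound $\sum_n \|p_n(f-f_n)p_n\|_1 \leq 2\|f\|_1$ from the Proposition upgraded to an $L^2$-type bound using $\|b_d^{(n)}\|_\infty \lesssim \lambda$ (since $f, f_n \leq$ stuff controlled by $\lambda$ on the relevant projections) so that $\|v_n\|_{2,w}^2 \lesssim \lambda \|b_d^{(n)}\|_{1,w}$ and summing.

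Concretely, the decaying kernel $\kappa(j)$ should come out as $\kappa(j) \lesssim \big(C(w)[w]_{A_1}\big)^{1/2} 2^{-|j|\delta/2}$ (or a similar summable expression): for the regime $k \leq n$ it is Lemma \ref{main lemma} applied to the boundary term $M_{k,n}$ that produces $2^{(k-n)\delta/2}$ with constant $C(w)^{1/2}[w]_{A_1}^{1/2}$, and for $k > n$ one gets geometric decay from the martingale structure (both $M_k$ on scale $2^{-k}$ and $\CE_k$ kill the mean-zero, level-$n$-localized bump, leaving only a boundary contamination again estimated by Lemma \ref{main lemma}). Summing, $\sum_k \|S_k h\|_{2,w}^2 \leq \kappa^2 \sum_n \|v_n\|_{2,w}^2 \lesssim C(w)[w]_{A_1} \cdot \lambda\|f\|_{1,w}$, and combined with the Chebyshev step and the already-established bound $\varphi_w(1-\zeta)\lesssim [w]_{A_1}^2\|f\|_{1,w}/\lambda$ from Lemma \ref{estimate of zeta perp}, we obtain $\tilde\varphi_w(|\zeta Tb_d\zeta|>\lambda/9) \lesssim C(w)[w]_{A_1}^2 \|f\|_{1,w}/\lambda$ as claimed.

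The main obstacle I anticipate is making the almost-orthogonality argument genuinely \emph{weighted}: Lemma \ref{almost orthogonality} is stated for the unweighted $L^2$, and its proof uses the Hilbert space structure and Cauchy–Schwarz on the indices, so one must either rerun it with $\varphi_w$ in place of the trace (which works since $\varphi_w$ is again a trace and $L^2_w(\CN)$ is a Hilbert space) or carefully localize the weight cube-by-cube. The verification of the per-frequency bound $\|\zeta T_k(b_d^{(n)})\zeta\|_{2,w} \leq \kappa(n-k)\|v_n\|_{2,w}$ is where the weight interacts with the geometry, and the two properties of $A_1$ weights recorded after Theorem \ref{weighted max ineq classical} — the doubling-type bound $w(S)/w(Q) \leq C(w)(\mu(S)/\mu(Q))^\delta$ and $w(Q)/\mu(Q) \leq [w]_{A_1}\inf_Q w$ — are exactly what let Lemma \ref{main lemma} absorb the weight; so the real work is bookkeeping the constants through the two regimes $k\leq n$ and $k>n$ and confirming nothing blows up when we split $b_d$ through $\zeta$.
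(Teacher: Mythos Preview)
Your overall strategy --- Chebyshev at $L^2$, orthogonality of the Rademachers, then Lemma~\ref{almost orthogonality} with per-frequency decay supplied by Lemma~\ref{main lemma} --- is exactly the paper's. There are, however, two genuine gaps in the details.

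For the regime $k\geq n$ you write that $M_k,\CE_k$ ``annihilate $b_d^{(j)}$ or control it with a comparable decaying factor'', invoking $\CE_n(b_n^d)=0$. This does not work: for $k>n$ the expectation $\CE_k$ is \emph{finer} than $\CE_n$, so $\CE_n(b_n^d)=0$ says nothing about $\CE_k(b_n^d)$ or $M_k(b_n^d)$, and Lemma~\ref{main lemma} is only available for $k<n$. The actual mechanism is pure cancellation through $\zeta$: for $k\geq n$ and any $y$ in the averaging region $B_k+x$ (or $Q_{x,k}$), the $\CQ_n$-cube $Q'$ containing $y$ satisfies $x\in 5Q'$, so $\zeta(x)p_n(y)=\zeta(x)p_{Q'}=0$ by Proposition~\ref{can prop}; hence $\zeta(M_k-\CE_k)(b_n^d)\zeta=0$ identically. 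The paper records this by citing \cite[Proposition~3.9]{hong2021noncommutative}.

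Your choice of $v_n$ relies on $\|b_d^{(n)}\|_\infty\lesssim\lambda$, and that claim is false: $b_n^d=p_nfp_n-p_nf_np_n$, and while dyadic regularity $f_n\leq 2^d f_{n-1}$ together with Cuculescu's property $q_{n-1}f_{n-1}q_{n-1}\leq\lambda$ does give $p_nf_np_n\leq 2^d\lambda\, p_n$, there is no comparable uniform bound on $p_nfp_n$. The paper instead takes $v_n=\lambda p_n$, so that $\sum_n\|v_n\|_{2,w}^2=\lambda^2\varphi_w(1-q)\lesssim\lambda[w]_{A_1}\|f\|_{1,w}$ by Theorem~\ref{galkazka's estimate}; the per-frequency bound $\|M_{k,n}b_n^d\|_{2,w}\lesssim (C(w)[w]_{A_1})^{1/2}2^{(k-n)\delta/2}\lambda\|p_n\|_{2,w}$ then comes from the \emph{second} part of Lemma~\ref{main lemma} applied to the positive pieces $p_nfp_n$ and $p_nf_np_n$ separately (using $\CE_n(p_nfp_n)=p_nf_np_n$) combined with $\|p_nf_np_n\|_{2,w}\leq 2^d\lambda\|p_n\|_{2,w}$. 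That is what the paper means by ``follows from Lemma~\ref{main lemma} and Cuculescu's construction''.
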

	
	\begin{proof}
		Observe that by Chebychev's inequality it is enough to show that 
		\begin{equation}
			\norm{\zeta Tb_d \zeta}_{2,w}^2 \lesssim \lambda^2 \sum_{n} \norm{p_n}_{2,w}^2.
		\end{equation}
		Furthermore, note that 
		\begin{equation}
			\sum_{n} \norm{p_n}_{2,w}^2= \sum_{n} \norm{p_n}_{1,w}= \tilde{\varphi}_w(\sum_{n} p_n)= \tilde{\varphi}_w(1-q) \lesssim [w]_{A_1} \frac{\norm{f}_{1,w}}{\lambda}.
		\end{equation}
		Also,
		\begin{align*}
			\norm{\zeta Tb_d \zeta}_{2,w}^2 = \sum_{k} \norm{\zeta (M_k - \CE_k)(b_d) \zeta}_{2,w}^2.
		\end{align*}
		Therefore, it is enough to prove that 
		\begin{equation}
			\sum_{k} \norm{\zeta (M_k - \CE_k)(b_d) \zeta}_{2,w}^2 \lesssim \lambda^2 \sum_{n} \norm{p_n}_{2,w}^2.
		\end{equation}
		Observe that $b_d:= \sum_n b^d_n$, where, $b^d_n:= p_n(f-f_n)p_n$. Now we invoke Lemma \ref{almost orthogonality} and see that it is further enough to prove 
		\begin{align*}
			\norm{\zeta (M_k - \CE_k)(b^d_n) \zeta}_{2,w}^2 \lesssim 2^{-2 \abs{k-n} \delta} \lambda^2 \norm{p_n}_{2,w}^2.
		\end{align*}
		Now we divide the proof in two subcases, that is for $k\geq n$ and $k<n$. For $k\geq n$, following the same argument as in \cite[Proposition 3.9]{hong2021noncommutative} it can be shown that 
		\begin{equation}
			\zeta (M_k - \CE_k)(b^d_n) \zeta = 0.
		\end{equation}
		Now for $k<n$,  $\CE_k(b_n^d)= \CE_k \CE_n(b^d_n)=0$. Hence, in this case observe that $M_k b^d_n= M_{k,n} b^d_n$ . Indeed,
		\begin{align*}
			M_k b_n^d(x)
			&= \frac{1}{\abs{B_k}} \int_{B_k +x} b^d_n(y) dy\\
			&= \frac{1}{\abs{B_k}} \sum_{Q \in \CQ_n} \int_{B_k +x \cap Q} b^d_n(y) dy\\
			&= \frac{1}{\abs{B_k}} \int_{\CI(B_k+x, n)} b^d_n(y) dy= M_{k,n} b^d_n(x)
		\end{align*}
		Therefore, it suffices to prove that 
		\begin{equation}
			\norm{M_{k,n} b^d_n}_{2,w} \lesssim C(w) 2^{(k-n)\delta} [w]_{A_1} \lambda \norm{p_n}_{2,w},
		\end{equation}
		which now follows from Lemma \ref{main lemma} and Cuculescu's construction.
	\end{proof}
	
	\subsubsection*{\underline{Estimate for $Tb_{off}$}}
	
	Let us now recall a few definitions from \cite{cadilhac2022noncommutative} which will be helpful for the weak estimate of $Tb_{off}$. Recall that $B_n$ denote the open ball in $\R^d$ of radius equal to $2^{-n}$ for all $n \in \Z$.
	
	\begin{defn}
		For a bounded set $K \subseteq \R^d$ and $E \subseteq \R^d$, define the $K$-boundary of $E$ as
		\begin{align*}
			\partial_{K}(E):= \bigcup_{y \in \R^d}\{K +y: (K+y) \cap E \neq \emptyset \text{ and } (K+y) \cap E^c \neq \emptyset\},
		\end{align*}
		where $E^c:= \R^d \setminus E$.
	\end{defn}
	\noindent	Further recall that $\CQ_n$ denote the collection of all dyadic cubes of $\R^d$ of $n$-th generation. Now for $E \subseteq \R^d$, let us define the set
	\begin{align*}
		\partial_{\CQ_n}(E):= \bigcup \{Q \in \CQ_n : Q \cap E \neq \emptyset, Q \not\subseteq E\}.
	\end{align*}
	
	\begin{prop}\label{geom prop}
		Let $n,k \in \N$ and $x \in \R^d$. Then, the following properties hold.
		\begin{enumerate}
			\item[(i)] $\partial_{\CQ_n}(B_k+x)\subseteq \partial_{B_n} (B_k +x)$.
			\item[(ii)] $\partial_{B_n}(B_k + x)= \partial_{B_n}(B_k) + x$.
		\end{enumerate}
	\end{prop}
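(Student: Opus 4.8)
The plan is to prove the two assertions separately, starting with the translation identity (ii), which is purely formal, and then deducing the inclusion (i) from a pointwise geometric observation about dyadic cubes that straddle the sphere $\partial(B_k+x)$.

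For (ii) I would simply unwind the definition of $\partial_{B_n}$. Write $\partial_{B_n}(B_k+x)$ as the union, over all $y\in\R^d$, of those translates $B_n+y$ with $(B_n+y)\cap(B_k+x)\neq\emptyset$ and $(B_n+y)\cap(B_k+x)^c\neq\emptyset$, and substitute $y=z+x$. Since translation by $x$ is a bijection of $\R^d$ that commutes with intersections and complements, one has $(B_n+z+x)\cap(B_k+x)=\bigl((B_n+z)\cap B_k\bigr)+x$ and $(B_n+z+x)\cap(B_k+x)^c=\bigl((B_n+z)\cap B_k^c\bigr)+x$; in particular both are nonempty precisely when the corresponding sets for $B_k$ (without the shift) are nonempty. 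Hence the family of translates whose union is $\partial_{B_n}(B_k+x)$ is exactly the $x$-translate of the family whose union is $\partial_{B_n}(B_k)$, and taking unions gives $\partial_{B_n}(B_k+x)=\partial_{B_n}(B_k)+x$. This is a one-line displayed computation.

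For (i) it suffices to show that every dyadic cube $Q\in\CQ_n$ with $Q\cap(B_k+x)\neq\emptyset$ and $Q\not\subseteq B_k+x$ is contained in $\partial_{B_n}(B_k+x)$, since $\partial_{\CQ_n}(B_k+x)$ is the union of all such $Q$. Fix such a $Q$. As $Q$ is a product of half-open intervals it is connected, and $Q\cap(B_k+x)$ is a nonempty proper open subset of $Q$; were $Q$ disjoint from $\partial(B_k+x)$, then $Q\cap(B_k+x)^c$ would also be open in $Q$, contradicting connectedness. So there is a point $q\in Q\cap\partial(B_k+x)$, i.e.\ $q$ lies on the sphere of radius $2^{-k}$ about $x$. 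Now let $p\in Q$ be arbitrary. Since $q$ is a boundary point of the open ball $B_k+x$, every open neighbourhood of $q$ meets both $B_k+x$ and $(B_k+x)^c$; hence it is enough to produce $y\in\R^d$ with $|y-q|<2^{-n}$ and $|y-p|<2^{-n}$, for then $B_n+y$ is such a neighbourhood of $q$ and also contains $p$, so that $p\in\partial_{B_n}(B_k+x)$. Such a $y$ — essentially the midpoint of $p$ and $q$ — exists because $p$ and $q$ both lie in the generation-$n$ dyadic cube $Q$, whose diameter is controlled by the radius $2^{-n}$ of $B_n$ up to the fixed dimensional comparison between $B_n$ and cubes of side $2^{-n}$. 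Since $p\in Q$ was arbitrary, $Q\subseteq\partial_{B_n}(B_k+x)$, and (i) follows.

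The soft parts — the connectedness argument in (i) and the change of variables in (ii) — are routine. The one step requiring care is the last geometric claim in (i): that a single translate of $B_n$ can simultaneously contain a prescribed point of a generation-$n$ dyadic cube and be a neighbourhood of a prescribed boundary point lying on that same cube. This is a purely Euclidean statement about balls of radius $2^{-n}$ versus cubes of side $2^{-n}$, and it is precisely where the normalization of the radius of $B_n$ relative to the dyadic scale does its work; I would isolate it as an elementary lemma guaranteeing that, at the generation-$n$ scale, translates of $B_n$ are large enough to reach from any point of such a cube to a boundary point of that cube, and would be careful to track the dimensional constants there.
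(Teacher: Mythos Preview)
Your treatment of (ii) is exactly the paper's: the substitution $y=z+x$ in the defining union. For (i) your argument is correct but more elaborate than the paper's. The paper simply observes that a dyadic cube $Q\in\CQ_n$ of side $2^{-n}$ sits inside a single translate $B_n+y$ (take $y$ the center of $Q$); since $Q$ already meets both $B_k+x$ and its complement, so does $B_n+y$, and hence $Q\subseteq B_n+y\subseteq\partial_{B_n}(B_k+x)$ in one step. Your route---locating a boundary point $q\in Q\cap\partial(B_k+x)$ via connectedness and then, for each $p\in Q$, producing a separate translate $B_n+y_p$ containing both $p$ and $q$---works, but the intermediate point $q$ is not needed: the very translate you build for a given $p$ could just as well be the fixed one centered at $Q$. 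In particular the ``elementary lemma'' you isolate (two points of a generation-$n$ cube fit in one translate of $B_n$) is precisely the containment $Q\subseteq B_n+y$ that the paper invokes directly, so both arguments rest on the same comparison of a cube of side $2^{-n}$ with a ball of radius $2^{-n}$, and neither gains anything the other lacks.
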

	
	\begin{proof}
		\emph{(i):} Let $Q \in \CQ_n$ such that $Q \cap (B_k+x) \neq \emptyset$ and $Q \not\subseteq B_k+x$. Since $Q$ is a cube of side length $2^{-n}$ and $B_n$ is a ball of radius $2^{-n}$, we conclude that there exists $y \in \R^d$ such that $Q \subseteq B_n +y$. Therefore, we get the result.\\
		
		\emph{(ii):} Let $z \in \partial_{B_n}(B_k+x)$. Then, there exists $y \in \R^d$ such that $z \in B_n +y$ with $(B_n +y ) \cap (B_k + x) \neq \emptyset $ and $(B_n +y ) \cap (B_k + x)^c \neq \emptyset $. Therefore, $z-x \in B_n + (y-x)$ and $(B_n + (y-x)) \cap B_k \neq \emptyset$ and $(B_n + (y-x)) \cap B_k^c \neq \emptyset$. Hence, we obtain $\partial_{B_n}(B_k + x) \subseteq \partial_{B_n}(B_k) + x$. The reverse direction follows similarly.
	\end{proof}
	
	We also recall the following observation from \cite{cadilhac2022noncommutative}, which we are going to use in the sequel.
	
	\begin{prop}\cite[Lemma 4.4]{cadilhac2022noncommutative}\label{Cadilhac prop}
		Let $k \in \N$ and $E$ be the union of all $\CQ_k$-atoms and $K \subset E$ and $\lambda>0$, then
		\begin{align*}
			\norm{\int_K p_k f q_k}_1 \leq \lambda 2 \varphi(\chi_E p_k f) \text{ and } \norm{\int_K q_k f p_k}_1 \leq \lambda 2 \varphi(\chi_E p_k f)
		\end{align*}
	\end{prop}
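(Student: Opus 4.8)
The plan is to pass to the dyadic cubes of the $k$-th generation and estimate cube by cube. Since $p_k,q_k\in\CN_k=L^\infty(\R^d,\CB_k)\,\overline{\otimes}\,\CM$, on every atom $Q\in\CQ_k$ they are constant, equal to projections $p_Q,q_Q\in\CM$; writing $\widehat Q$ for the parent of $Q$ in $\CQ_{k-1}$ and $a_Q:=q_{\widehat Q}\,\CE_k(f)|_Q\,q_{\widehat Q}=q_{\widehat Q}\,\tfrac{1}{|Q|}\big(\int_Q f\big)\,q_{\widehat Q}$, Cuculescu's construction gives $q_Q=\chi_{(0,\lambda]}(a_Q)$ and $p_Q=q_{\widehat Q}-q_Q=\chi_{\{0\}}(a_Q)+\chi_{(\lambda,\infty)}(a_Q)$. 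As $E$ is a union of such atoms and $K\subset E$, only cubes $Q\subset E$ contribute, and
\[
\int_K p_k f q_k=\sum_{Q\subset E}p_Q\Big(\int_{K\cap Q}f\Big)q_Q .
\]
By the triangle inequality in $L^1(\CM,\tau)$ it then suffices to bound $\big\|p_Q\big(\int_{K\cap Q}f\big)q_Q\big\|_1$ by a constant multiple of $\lambda\,\varphi(\chi_Q\,p_k f)$ and sum, since $\sum_{Q\subset E}\varphi(\chi_Q\,p_k f)=\varphi(\chi_E\,p_k f)$.

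Fix $Q\subset E$ and put $h_Q:=\int_{K\cap Q}f$, $G_Q:=\int_Q f$, so $0\le h_Q\le G_Q$. The first input is Cuculescu's inequality at level $k$: $q_Q G_Q q_Q\le\lambda|Q|\,q_Q$, which combined with $h_Q\le G_Q$ yields only the operator-norm bound $\|h_Q^{1/2}q_Q\|_\infty=\|q_Q h_Q q_Q\|_\infty^{1/2}\le(\lambda|Q|)^{1/2}$; one is forced to use an $L^\infty$-bound on the $q_Q$-side because $\tau(q_Q)$ is in general infinite, so that factor cannot carry an $L^1$- or $L^2$-norm. The second, essential, input is the cancellation
\[
p_Q G_Q q_Q=0 ,
\]
which holds because $q_{\widehat Q}G_Q q_{\widehat Q}=|Q|\,a_Q$ commutes with the spectral projections $p_Q,q_Q$ of $a_Q$; equivalently $\CE_k(p_k f q_k)=0$, the noncommutative analogue of the vanishing mean of the classical bad part, which is precisely why the classical statement is trivial. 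It allows us to rewrite $p_Q h_Q q_Q=-\,p_Q\big(\int_{Q\setminus K}f\big)q_Q$, so we may replace $K\cap Q$ by whichever of $K\cap Q$, $Q\setminus K$ is more convenient, and exploit $h_Q\le G_Q$ on both.

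The per-cube estimate is then a Hölder computation: factor $p_Q h_Q q_Q=(p_Q h_Q^{1/2})(h_Q^{1/2}q_Q)$, so that
\[
\|p_Q h_Q q_Q\|_1\le\|p_Q h_Q^{1/2}\|_1\,\|h_Q^{1/2}q_Q\|_\infty\le(\lambda|Q|)^{1/2}\,\tau\big((p_Q h_Q p_Q)^{1/2}\big).
\]
Now $p_Q h_Q p_Q\le p_Q G_Q p_Q=|Q|\,a_Q\,r_Q$ with $r_Q:=\chi_{(\lambda,\infty)}(a_Q)\le p_Q$, an operator supported on $r_Q$ and satisfying $p_Q G_Q p_Q\ge\lambda|Q|\,r_Q$; since $p_Q G_Q p_Q$ and $r_Q$ commute this forces $p_Q G_Q p_Q\ge(\lambda|Q|)^{1/2}(p_Q G_Q p_Q)^{1/2}$, hence $\tau\big((p_Q h_Q p_Q)^{1/2}\big)\le\tau\big((p_Q G_Q p_Q)^{1/2}\big)\le(\lambda|Q|)^{-1/2}\tau(p_Q G_Q p_Q)=(\lambda|Q|)^{-1/2}\varphi(\chi_Q p_k f)$. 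Combining gives $\|p_Q h_Q q_Q\|_1\lesssim\lambda\,\varphi(\chi_Q p_k f)$ (the precise numerical constant being where the bookkeeping is most delicate), and summing over $Q\subset E$ yields the first inequality; the second follows by taking adjoints, using $(p_k f q_k)^*=q_k f p_k$ (as $f\ge0$) and $\|x\|_1=\|x^*\|_1$. The step I expect to be the real obstacle is exactly this per-cube balancing: because $\tau(q_k)$ may be infinite one cannot split the two factors symmetrically in $L^2$, so the entire $q_k$-factor must be carried in the operator norm, and recovering the correct power of $\lambda$ then rests on the spectral gap between $p_Q$ and $q_Q$ in the Cuculescu construction together with the bound $h_Q\le G_Q$ and, when $K\cap Q$ is large, the cancellation $p_Q G_Q q_Q=0$.
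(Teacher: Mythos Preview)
The paper does not supply a proof of this proposition; it is quoted verbatim from \cite[Lemma~4.4]{cadilhac2022noncommutative}. So there is no ``paper's own proof'' to compare to, and your task was really to reconstruct the argument from scratch.

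Your reconstruction is correct, and in fact yields the constant $1$ rather than $2$. The decisive chain is
\[
\Big\|p_Q\Big(\int_{K\cap Q}f\Big)q_Q\Big\|_1
\le \big\|p_Q h_Q^{1/2}\big\|_1\,\big\|h_Q^{1/2}q_Q\big\|_\infty
\le (\lambda|Q|)^{1/2}\,\tau\big((p_QG_Qp_Q)^{1/2}\big)
\le \tau(p_QG_Q)=\varphi(\chi_Q p_kf),
\]
where the last inequality uses that $p_QG_Qp_Q=|Q|\,a_Qr_Q$ is supported on $r_Q=\chi_{(\lambda,\infty)}(a_Q)$ and satisfies $p_QG_Qp_Q\ge\lambda|Q|\,r_Q$, whence $(p_QG_Qp_Q)^{1/2}\le(\lambda|Q|)^{-1/2}p_QG_Qp_Q$. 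Operator monotonicity of the square root, the $(L^1,L^\infty)$ H\"older split, and the observation that the $q_Q$-factor can only be estimated in operator norm are exactly the right ingredients, and you identify them clearly.

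Two small remarks. First, your formula $p_Q=\chi_{\{0\}}(a_Q)+\chi_{(\lambda,\infty)}(a_Q)$ is not quite right: since $\chi_{\{0\}}(a_Q)\ge 1-q_{\widehat Q}$, the correct expression is $p_Q=q_{\widehat Q}\chi_{\{0\}}(a_Q)+\chi_{(\lambda,\infty)}(a_Q)$. This is harmless for the rest of the argument because the kernel part is annihilated by $a_Q$ anyway, so $p_QG_Qp_Q=|Q|\,a_Qr_Q$ survives unchanged. Second, the cancellation $p_QG_Qq_Q=0$ (equivalently $\CE_k(p_kfq_k)=0$) that you highlight as ``the second, essential, input'' is true and worth recording, but your final chain of inequalities never actually invokes it: the estimate goes through using only $0\le h_Q\le G_Q$, the Cuculescu bound $q_QG_Qq_Q\le\lambda|Q|\,q_Q$, and the spectral lower bound on $p_QG_Qp_Q$. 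You can therefore drop the discussion of replacing $K\cap Q$ by $Q\setminus K$.
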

	
	\noindent We are now ready to prove the estimate for $Tb_{off}$. But before that we need the following lemma.

	\begin{lem}\label{main lem-off diag}
		Fix $n \geq 1$, then we have
		\begin{align*}
			\sum_{k: k< n} \norm{M_k b^{off}_n}_{1,w} \lesssim C(w)   [w]_{A_1}^2 \norm{f}_{1,w},
		\end{align*}
		where, $b_n^{off}:= p_n(f- f_n) q_n + q_n(f- f_n) p_n$ for all $n \geq 1$.
	\end{lem}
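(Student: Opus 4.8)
The plan is to reduce to Lemma~\ref{main lemma}. The key observation is that for $k<n$ the ball average of $b_n^{off}$ collapses onto the ``boundary average'' $M_{k,n}$. Indeed, $p_n$, $q_n$ and $f_n=\CE_n(f)$ are all constant on each dyadic cube $Q\in\CQ_n$, and $\tfrac{1}{\abs{Q}}\int_Q f$ equals the value of $\CE_n(f)$ on $Q$; hence $\int_Q b_n^{off}\,dy=0$ for every $Q\in\CQ_n$ (equivalently $\CE_n(b_n^{off})=0$). Writing $B_k+x$ as the disjoint union of the sets $(B_k+x)\cap Q$, $Q\in\CQ_n$, the cubes contained in $B_k+x$ contribute nothing and the remaining pieces are exactly $\CI(B_k+x,n)$, so
\[
M_k b_n^{off}(x)=\frac{1}{\abs{B_k}}\int_{B_k+x}b_n^{off}\,dy=\frac{1}{\abs{B_k}}\int_{\CI(B_k+x,n)}b_n^{off}\,dy=M_{k,n}b_n^{off}(x),
\]
which is the same computation already made for $M_k b_n^d$ above.

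Next I would record the crude bound $\norm{b_n^{off}}_{1,w}\le 2\norm{f}_{1,w}$, uniform in $\lambda$ and $n$. By Cuculescu's construction $q_n$ commutes with $q_{n-1}\CE_n(f)q_{n-1}$, while $p_nq_n=0$, $p_nq_{n-1}=p_n$ and $q_{n-1}q_n=q_n$; therefore $p_n\CE_n(f)q_n=p_n\bigl(q_{n-1}\CE_n(f)q_{n-1}\bigr)q_n=p_nq_n\bigl(q_{n-1}\CE_n(f)q_{n-1}\bigr)=0$, and taking adjoints $q_n\CE_n(f)p_n=0$. Hence $b_n^{off}=p_nfq_n+q_nfp_n$. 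Using $\norm{pgq}_1\le\tau(pgp)^{1/2}\tau(qgq)^{1/2}\le\norm{g}_1$ for $g\ge0$ and projections $p,q$, we get $\norm{b_n^{off}(y)}_1\le 2\norm{f(y)}_1$ for almost every $y$, and integrating against $w$ gives the bound. Now apply Lemma~\ref{main lemma} with $p=1$ (it is valid for the element $b_n^{off}\in L^1_w(\CN)$, even though it is not positive) and sum the geometric series $\sum_{k<n}2^{(k-n)\delta}=(2^\delta-1)^{-1}$:
\[
\sum_{k<n}\norm{M_k b_n^{off}}_{1,w}=\sum_{k<n}\norm{M_{k,n}b_n^{off}}_{1,w}\lesssim \frac{C(w)[w]_{A_1}}{2^\delta-1}\,\norm{b_n^{off}}_{1,w}\lesssim C(w)[w]_{A_1}^2\norm{f}_{1,w},
\]
using $[w]_{A_1}\ge 1$ and absorbing the $\delta$-dependent factor (which depends only on $w$) into $C(w)$.

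The one step that genuinely requires care is the first one: it is the vanishing of the mean of $b_n^{off}$ over every generation-$n$ cube that allows one to replace the full ball average $M_k$ by the boundary average $M_{k,n}$, and this is precisely what makes the geometric decay $2^{(k-n)\delta}$ of Lemma~\ref{main lemma} available (without it one only gets a bound uniform in $k$, and then the sum over $k$ diverges). The remaining steps are routine.
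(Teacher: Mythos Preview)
Your argument is correct for the lemma \emph{as stated} (fixed $n$), but it follows a genuinely different route from the paper's proof, and the difference matters for how the lemma is used.

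You reduce directly to Lemma~\ref{main lemma}: once $M_kb_n^{off}=M_{k,n}b_n^{off}$, you apply the first part of Lemma~\ref{main lemma} with $p=1$ to the element $b_n^{off}\in L^1_w(\CN)$ and bound $\norm{b_n^{off}}_{1,w}\le 2\norm{f}_{1,w}$ via Cauchy--Schwarz for the positive form $\tau(a^*fb)$. This is clean and entirely correct; the cancellation $p_nf_nq_n=0$ and the identification $M_k=M_{k,n}$ are exactly the same first steps the paper makes.

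The paper, however, does not invoke Lemma~\ref{main lemma} here. Instead it uses Cadilhac's estimate (Proposition~\ref{Cadilhac prop}) to bound $\norm{M_{k,n}b_n^{off}(x)}_1$ pointwise by $\tfrac{2\lambda}{\abs{B_k}}\varphi(\chi_{\partial_{\CQ_n}(B_k+x)}p_nf)$, and then integrates against $w$ directly, obtaining
\[
\norm{M_k b_n^{off}}_{1,w}\lesssim \lambda\,2^{(k-n)\delta}\,C(w)[w]_{A_1}\,\varphi_w(p_nf).
\]
The crucial gain is the factor $\varphi_w(p_nf)$ in place of your $\norm{f}_{1,w}$: since $\sum_{n\ge1}p_n=1-q$, the paper's bound is \emph{summable in $n$}, and indeed the paper's proof goes on to sum over all $n$ and obtain the double-sum estimate $\sum_{n\ge1}\sum_{k<n}\norm{M_k b_n^{off}}_{1,w}\lesssim \lambda C(w)[w]_{A_1}\norm{f}_{1,w}$, which is precisely what Proposition~\ref{can prop}'s successor needs after Chebyshev. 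Your bound $C(w)[w]_{A_1}\norm{f}_{1,w}$ is uniform in $n$ but not summable in $n$, so while it proves the lemma as literally stated, it would not suffice on its own for the estimate of $\tilde{\varphi}_w(\abs{\zeta Tb_{off}\zeta}>\lambda/9)$ in the subsequent proposition.
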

	
	\begin{proof}
		First observe that
		\begin{align*}
			M_k b_n^{off} (x)
			&= \frac{1}{\abs{B_k}} \int_{B_k +x } b^{off}_n (y) dy\\
			&= \frac{1}{\abs{B_k}} \int_{\partial_{\CQ_n}(B_k +x) \cap (B_k + x)} b^{off}_n(y) dy.
		\end{align*}
		Further note that $p_nf_nq_n=0$ for all $n \in \N$. Hence,
		\begin{align*}
			b_n^{off} = p_n f q_n + q_n f p_n.
		\end{align*}
		Therefore, by Proposition \ref{Cadilhac prop}, it is enough to consider $b_n^{off}= p_n f q_n$. Hence, we have
		\begin{align*}
			\norm{M_{k} b_n^{off}(x)}_1 
			&\leq \frac{2\lambda}{\abs{B_k}} \varphi (\chi_{\partial_{\CQ_n}(B_k +x)} p_n f )\\
			&\leq \frac{2\lambda}{\abs{B_k}} \int_{\R^d}  \chi_{(\partial_{B_n}(B_k)+x)}(y) \tau((p_n f )(y)) dy~ (\text{by Proposition. } \ref{geom prop}).
		\end{align*}
		Hence, for $k<n$ we have


		\begin{align*}
			\norm{M_k b_n^{off}}_{1,w}
			&= \int_{\R^d} \norm{M_{k} b_n^{off}(x)}_1 w(x) dx\\
			&\leq \frac{2\lambda}{\abs{B_k}} \int_{\R^d} \Big( \int_{\R^d}  \chi_{(\partial_{B_n}(B_k)+x)}(y) \tau((p_n f)(y)) dy \Big) w(x) dx\\
			&= \frac{2\lambda}{\abs{B_k}} \int_{\R^d} \Big( \int_{\R^d} \chi_{(y - \partial_{B_n}(B_k)}) (x) w(x) dx\Big) \tau((p_n f )(y)) dy\\
			&=  \frac{2\lambda}{\abs{B_k}} \int_{\R^d} w(y - \partial_{B_n}(B_k)) \tau((p_n f )(y)) dy\\
			&\lesssim C(w) \frac{2\lambda}{\abs{B_k}}  \int_{\R^d} w(Q_y) \frac{\abs{y - \partial_{B_n}(B_k)}^\delta}{\abs{Q_y}^\delta}  \tau((p_n f )(y)) dy\\
			&\lesssim C(w) \frac{2\lambda}{\abs{B_k}} \frac{\abs{\partial_{B_n}(B_k)}^\delta}{\abs{Q_y}^{\delta-1}} [w]_{A_1} \int_{\R^d}  \tau((p_n f )(y)) w(y) dy\\
			&\lesssim C(w) 2 \lambda 2^{(k-n)\delta} [w]_{A_1} \varphi_w(p_nf),
		\end{align*}
		where $Q_y$ is a cube centered at $y$ of side length $2^{-k+1}$ and containing $y - \partial_{B_n}(B_k)$. So, for $k<n$, we have

		\begin{equation*}
			\norm{M_k(b_n^{off})}_{1,w} \lesssim \lambda 2^{(k-n)\delta} C(w) [w]_{A_1} \varphi_w (p_n f).
		\end{equation*}
		
		Therefore,
		\begin{align*}
			\sum_{(k,n) \in \N \times \N, ~ k<n} \norm{M_k(b_n^{off})}_{1,w}
			&= \sum_{n=1}^\infty \sum_{k=1}^{n-1} \norm{M_k(b_n^{off})}_{1,w}\\
			&\lesssim \lambda C(w) [w]_{A_1} \sum_{n=1}^\infty \sum_{k=1}^{n-1} 2^{(k-n)\delta} \varphi_w (p_n f) \\
			&\lesssim \lambda C(w) [w]_{A_1} \sum_{n=1}^\infty \varphi_w (p_n f)\\
			& \lesssim \lambda C(w)  [w]_{A_1} \varphi_w((1-q)f) \\
			&\lesssim C(w) [w]_{A_1}^2 \norm{f}_{1,w}, ~\text{by Theorem} \quad\ref{galkazka's estimate}.
		\end{align*}
		This completes the proof.
	\end{proof}
	
	\begin{prop}\label{can prop}
		For all $Q \in \CQ$, the following cancellation property holds true.
		\begin{align*}
			x \in 5Q \Rightarrow p_Q \zeta(x)= \zeta(x) p_Q =0.
		\end{align*}
	\end{prop}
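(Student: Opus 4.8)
The plan is to read the identity straight off the definition of $\zeta$ as the orthocomplement of a join of projections in $\CN$. First I would record the elementary lattice fact that for \emph{every} fixed $Q'\in\CQ$ the projection $p_{Q'}\chi_{5Q'}$ lies below $\bigvee_{Q\in\CQ} p_Q\chi_{5Q}$, so that passing to orthocomplements gives $\zeta \leq 1_\CN - p_{Q'}\chi_{5Q'}$. Specializing $Q'$ to the cube $Q$ in the statement, $\zeta$ and $p_Q\chi_{5Q}$ are then mutually orthogonal projections in $\CN$, i.e. $\zeta\,(p_Q\chi_{5Q}) = (p_Q\chi_{5Q})\,\zeta = 0$.

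Next I would pass from this operator identity in $\CN$ to the pointwise statement. Writing $\CN = L^\infty(\R^d)\,\overline{\otimes}\,\CM$ as the algebra of essentially bounded weak-$*$ measurable functions $\R^d\to\CM$, multiplication is computed pointwise a.e., and a countable join of projections is the pointwise a.e.\ join (the direct-integral description of $\CN$; here $\CQ$ is countable). Hence $\zeta(x)$ is, for a.e.\ $x$, the orthocomplement of $\bigvee_{Q:\,x\in 5Q} p_Q$, and in particular $\zeta(x)\,(p_Q\chi_{5Q})(x) = 0$ for a.e.\ $x$. Since $p_k$ is $\CB_k$-measurable, $(p_Q\chi_{5Q})(x) = p_Q$ for $x\in 5Q$ (with $Q\in\CQ_k$), so this reads $\zeta(x)\,p_Q = 0$ for a.e.\ $x\in 5Q$; taking adjoints and using $\zeta(x),p_Q\in\CP(\CM)$ self-adjoint gives $p_Q\,\zeta(x) = 0$ as well. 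That is the assertion.

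I do not expect a genuine obstacle: the whole content is unwinding the definition of $\zeta$ together with the fact that joins and products in the direct integral $\CN$ act fiberwise. The one point requiring a word of care is the ``a.e.\ versus everywhere'' issue in the conclusion, which is harmless because in the Calder\'on--Zygmund argument $\zeta$ enters only as an element of $\CN$ (and if a fixed representative is wanted one takes the fiberwise join, for which the identity then holds at \emph{every} $x\in 5Q$).
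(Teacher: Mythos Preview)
Your argument is correct and is exactly the natural unwinding of the definition of $\zeta$: since $p_Q\chi_{5Q}\le \bigvee_{Q'\in\CQ}p_{Q'}\chi_{5Q'}$, the projection $\zeta$ is orthogonal to $p_Q\chi_{5Q}$ in $\CN$, and evaluating this fiberwise at $x\in 5Q$ gives $\zeta(x)p_Q=p_Q\zeta(x)=0$. The paper itself states this proposition without proof, treating it as an immediate consequence of the definition \eqref{defn of zeta}, so your write-up simply supplies the omitted details (including the careful remark on the a.e.\ versus everywhere issue, which is indeed harmless here).
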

	
	\begin{prop}
		The following is true.
		\begin{equation}
			\tilde{\varphi}_w ( \abs{\zeta Tb_{off} \zeta}> \lambda/9) \lesssim C(w) [w]^2_{A_1} \frac{\norm{f}_{1,w}}{\lambda}.
		\end{equation}
	\end{prop}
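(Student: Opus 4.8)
The plan is to reduce the claim, via Chebyshev's inequality, to the $L^1$-bound $\tilde{\varphi}_w(\abs{\zeta Tb_{off}\zeta})\lesssim C(w)[w]_{A_1}^2\norm{f}_{1,w}$, and then to expand $Tb_{off}=\sum_k\epsilon_k(M_k-\CE_k)b_{off}$ together with $b_{off}=\sum_{n\geq1}b_n^{off}$, where $b_n^{off}=p_n(f-f_n)q_n+q_n(f-f_n)p_n$. The central claim will be that conjugation by $\zeta$ kills every term $\zeta(M_k-\CE_k)(b_n^{off})\zeta$ with $k\geq n$, and that for $k<n$ the conditional-expectation part drops out, so that one is left only with $\sum_{n\geq1}\sum_{k<n}\zeta M_kb_n^{off}\zeta$, which is exactly what Lemma~\ref{main lem-off diag} is built to estimate.

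First I would establish the vanishing $\zeta(M_k-\CE_k)(b_n^{off})\zeta=0$ for $k\geq n$. Fix $x\in\R^d$. Since $p_n$ equals a fixed projection $p_Q$ on each cube $Q\in\CQ_n$, and $\int_Q(f-f_n)=0$ over a full cube, both $M_kb_n^{off}(x)$ and $\CE_kb_n^{off}(x)$ are finite sums of terms of the form $p_Q(\cdots)q_n+q_n(\cdots)p_Q$, the sum running over those $Q\in\CQ_n$ that meet (but are not contained in) $B_k+x$ in the first case, and over the unique $Q\in\CQ_n$ containing the generation-$k$ dyadic cube of $x$ in the second (for $k=n$ the latter already vanishes, since $\CE_nb_n^{off}=0$). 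Because $k\geq n$, a ball of radius $2^{-k}$ and a dyadic cube of generation $k$ have $\ell^\infty$-diameter at most $2^{-n}$, so each relevant $Q$ has $x$ within $\ell^\infty$-distance $2^{-k}+2^{-n-1}\leq\tfrac{3}{2}\cdot2^{-n}$ of its center; hence $x\in 5Q$. Proposition~\ref{can prop} then gives $\zeta(x)p_Q=p_Q\zeta(x)=0$, and since $b_n^{off}$ carries $p_Q$ on one side or the other, conjugating by $\zeta(x)$ wipes out each term.

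For $k<n$ one has $\CE_k(b_n^{off})=\CE_k\CE_n(b_n^{off})=0$, so $(M_k-\CE_k)b_n^{off}=M_kb_n^{off}$; combined with the previous step this gives $\zeta Tb_{off}\zeta=\sum_{n\geq1}\sum_{k<n}\epsilon_k\otimes(\zeta M_kb_n^{off}\zeta)$. Using $\abs{\epsilon_k\otimes a}=1\otimes\abs{a}$, that $P$ is a probability measure, and that $\zeta$ is a contraction commuting with $w$, the triangle inequality in the $L^1$-space of $\tilde{\varphi}_w$ yields $\tilde{\varphi}_w(\abs{\zeta Tb_{off}\zeta})\leq\sum_{n\geq1}\sum_{k<n}\norm{M_kb_n^{off}}_{1,w}$. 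By Lemma~\ref{main lem-off diag} (summed over $n$, which converges thanks to the factor $2^{(k-n)\delta}$ produced in its proof) this is $\lesssim C(w)[w]_{A_1}^2\norm{f}_{1,w}$, and Chebyshev's inequality closes the argument.

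The step I expect to be the main obstacle is the geometric cancellation in the second paragraph: one has to check precisely that, as soon as $k\geq n$, neither an averaging ball of radius $2^{-k}$ nor a dyadic cube of generation $k$ can escape the fivefold dilate $5Q$ of any generation-$n$ cube $Q$ that it touches, so that the $5Q$-cancellation of $\zeta$ recorded in Proposition~\ref{can prop} annihilates every surviving $p_Q$ — and, because $b_n^{off}$ has the off-diagonal form $p_Q(\cdots)q_n+q_n(\cdots)p_Q$ rather than $p_Q(\cdots)p_Q$, the cancellation of $\zeta$ has to be applied on both sides. Granted this vanishing, the remainder is a routine triangle-inequality reduction feeding into Lemma~\ref{main lem-off diag}.
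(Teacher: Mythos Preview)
Your proposal is correct and follows essentially the same approach as the paper: Chebyshev reduces to an $L^1$-bound, the terms with $k\geq n$ vanish after conjugation by $\zeta$ via the geometric inclusion that forces $x\in 5Q$ and Proposition~\ref{can prop}, the conditional-expectation terms with $k<n$ vanish by $\CE_k\CE_n(b_n^{off})=0$, and the surviving sum $\sum_{n}\sum_{k<n}\norm{M_k b_n^{off}}_{1,w}$ is handled by Lemma~\ref{main lem-off diag}. Your write-up of the $k\geq n$ cancellation (tracking each $Q\in\CQ_n$ met by the ball and checking $x\in 5Q$) is in fact a bit more explicit than the paper's, which phrases the same inclusion as $x+B_k\subseteq 5Q_{x,n}$.
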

	
	\begin{proof}
		By Chebychev's inequality,
		\begin{align*}
			\tilde{\varphi}_w ( \abs{\zeta Tb_{off} \zeta}> \lambda/9) \leq \frac{\norm{\zeta Tb_{off} \zeta}_{1,w}}{\lambda}.
		\end{align*}
		Furthermore,
		\begin{align*}
			\norm{\zeta Tb_{off} \zeta}_{1,w} 
			&\leq \sum_{n=1}^{\infty} \sum_k \norm{\zeta \epsilon_k (M_k- \CE_k) b_n^{off} \zeta}_{1,w}\\
			&\lesssim \sum_{n=1}^{\infty} \sum_k \norm{\zeta (M_k- \CE_k) b_n^{off} \zeta}_{1,w}.
		\end{align*}
		
		\noindent	For $k \geq n$ and $x \in \R^d$, we have 
		\begin{align*}
			\zeta(x) M_k b_n^{off}(x) \zeta(x) 
			&= \zeta(x) \frac{1}{\abs{B_k}} \int_{x+ B_k} b_n^{off}(y) \chi_{\{y \notin 5 Q_{x,n}\}} dy \zeta(x) \\
			&=0,
		\end{align*}
		since $x+ B_k \subseteq 5 Q_{x,n}$ and we have the cancellation property as in proposition \ref{can prop}. We further note that
		\begin{align*}
			\zeta(x) \CE_k b_n^{off}(x) \zeta(x)
			&= \zeta(x) \frac{1}{\abs{Q_{x,k}}} \int_{Q_{x,k}} b_n^{off}(y) \chi_{\{y \notin 5 Q_{x,n}\}} dy \zeta(x) \\
			&=0, \text{ since for } k\geq n,~ Q_{x,k} \subset Q_{x,n}.
		\end{align*}
		
		\noindent On the other hand, for $k<n$, note that $\CE_k(b_n^{off})= \CE_k \CE_n(b_n^{off})=0$. Hence we obtain
		\begin{align*}
			\sum_{n=1}^{\infty} \sum_k \norm{\zeta (M_k- \CE_k) b_n^{off} \zeta}_{1,w}
			\lesssim \sum_{k: k< n} \norm{M_k b^{off}_n}_{1,w}.
		\end{align*} 
		Therefore, the result follows from Lemma \ref{main lem-off diag}.
	\end{proof}

	\subsection*{Estimate for the good part}
	
	\begin{thm}
		The following is true.
		\begin{align*}
			\lambda \tilde{\varphi}_w(\abs{Tg}> \lambda) \leq \max \{[w]^2_{A_1}, [w]^3_{A_1}\} \norm{f}_{L^1_w(\CN)}
		\end{align*}
	\end{thm}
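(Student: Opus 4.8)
The plan is to exploit the structure $g = qfq + \sum_{j\geq 1} p_j f_j p_j$ together with the two crucial facts from the Calder\'on--Zygmund decomposition: $\|g\|_\infty \lesssim \lambda$ and $\|g\|_1 \leq \|f\|_1$. Since $Tg = \sum_k \epsilon_k (M_k - \CE_k)g$ linearises the square function, I would first reduce the weak $(1,1)$ bound to an $L^2$ estimate via Chebyshev: it suffices to show $\|Tg\|_{2,w}^2 \lesssim \max\{[w]_{A_1}^2,[w]_{A_1}^3\}\,\lambda\,\|f\|_{1,w}$, because then $\lambda\tilde\varphi_w(|Tg|>\lambda) \leq \lambda^{-1}\|Tg\|_{2,w}^2$ gives the claim. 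By orthogonality of the Rademacher variables, $\|Tg\|_{2,w}^2 = \sum_k \|(M_k-\CE_k)g\|_{2,w}^2 = \|(T_k g)\|_{L^2_w(\CN;\ell_2^{rc})}^2$, so everything comes down to the (unweighted-flavoured but weighted-measured) square function bound for $g$ at the $L^2$ level.

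The main step is a weighted $L^2$ square-function inequality of the form $\sum_k \|(M_k-\CE_k)g\|_{2,w}^2 \lesssim C(w)\|g\|_{2,w}^2$. In the classical setting this is the $L^2$ boundedness of the operator $L$ proved in \cite{jones2003oscillation}; its weighted analogue for $A_1$-weights should follow either from a vector-valued Calder\'on--Zygmund / almost-orthogonality argument (Lemma \ref{almost orthogonality} applied with the telescoping decomposition of $g$ through the martingale differences, exactly as in the $b_d$ estimate but now without the projections $\zeta$), or more cheaply by transference from the scalar weighted theory: since $w$ commutes with $\CN$ and $L^2_w(\CN)$ is a Hilbert space, one can run the scalar argument coefficient-wise. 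Concretely, I would split $g - \CE_m g$ into pieces indexed by martingale level and apply Lemma \ref{main lemma} together with the $L^2$ Cotlar-type bounds $\|M_k \CE_j - \CE_k\|$ decaying like $2^{-|k-j|\delta}$ in the weighted norm; summing the geometric series produces $C(w)[w]_{A_1}\|g\|_{2,w}^2$, and tracking that one application of the $A_1$ reverse-Hölder property \eqref{weight prop} costs a factor $[w]_{A_1}$ while the weighted Doob/maximal bound from Theorem \ref{weighted max ineq classical} costs another.

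Finally I would interpolate between the two endpoint properties of $g$. Having $\|Tg\|_{2,w}^2 \lesssim C(w)\,\|g\|_{2,w}^2$ and wanting a bound in terms of $\|f\|_{1,w}$, I use $\|g\|_{2,w}^2 \leq \|g\|_{\infty}\,\|g\|_{1,w} \lesssim \lambda\,\|g\|_{1,w}$; here $\|g\|_{1,w} \lesssim [w]_{A_1}\|f\|_{1,w}$ should follow from $\|g\|_1 \leq \|f\|_1$ upgraded to the weighted norm via the $A_1$ property together with Theorem \ref{galkazka's estimate} applied to the Cuculescu projections controlling the $p_j f_j p_j$ terms (the $qfq$ term is bounded trivially in $L^1_w$ by $\|f\|_{1,w}$). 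Combining, $\|Tg\|_{2,w}^2 \lesssim C(w)[w]_{A_1}\lambda\|f\|_{1,w}$, and with the geometric-series constant this reads $\max\{[w]_{A_1}^2,[w]_{A_1}^3\}\lambda\|f\|_{1,w}$, as desired.

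\textbf{Anticipated main obstacle.} The delicate point is the weighted $L^2$ square function bound $\sum_k\|(M_k-\CE_k)g\|_{2,w}^2\lesssim C(w)\|g\|_{2,w}^2$: in the unweighted case this uses Plancherel/Fourier-side estimates for the multipliers $\widehat{M_k}-\widehat{\CE_k}$, which are not available once $w$ is inserted. One must instead argue entirely through the real-variable almost-orthogonality (Lemma \ref{almost orthogonality}) and the weighted kernel estimate of Lemma \ref{main lemma}, and it is exactly here that keeping the dependence on $[w]_{A_1}$ linear (so that the final constant is $\max\{[w]_{A_1}^2,[w]_{A_1}^3\}$ rather than worse) requires care — each invocation of \eqref{weight prop} or the reverse-Hölder inequality must be accounted for, and the summation over scales $k$ against the martingale levels must not lose an extra power of the weight constant.
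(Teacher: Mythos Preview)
Your overall architecture is exactly that of the paper: Chebyshev reduces to an $L^2_w$ bound for $Tg$, then $\|g\|_{2,w}^2 \leq \|g\|_\infty\|g\|_{1,w} \lesssim \lambda\|g\|_{1,w}$, and finally $\|g\|_{1,w} \lesssim \max\{1,[w]_{A_1}\}\|f\|_{1,w}$. So the proposal is correct.

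Two remarks on the execution. First, your ``anticipated main obstacle'' is a non-issue: the paper does not run a real-variable almost-orthogonality argument for the weighted $L^2$ square function bound. It takes precisely your cheap option (b): expand $h\in L^2_w(\CN)$ against an orthonormal basis of $L^2(\CM)$, use that $(M_k-\CE_k)(a\otimes\xi)=((M_k-\E_k)a)\otimes\xi$, so the $L^2_w(\CN)$ norm decouples into a sum of scalar weighted $L^2$ square functions, and then simply cites the scalar bound $\sum_k\|(M_k-\E_k)a\|_{L^2(w)}^2\lesssim [w]_{A_1}^2\|a\|_{L^2(w)}^2$ from \cite{krause2018weighted}. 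No Lemma \ref{almost orthogonality}, no Lemma \ref{main lemma} here. Second, for $\|g\|_{1,w}$ the paper does not invoke Theorem \ref{galkazka's estimate}; instead it computes directly: $\varphi_w(p_j f_j p_j)=\varphi(\CE_j(p_jf)w)=\varphi(p_jf\,\CE_j(w))\leq [w]_{A_1}\varphi_w(p_jf)$, using the module property of $\CE_j$ and the dyadic $A_1$ inequality $\CE_j(w)\leq [w]_{A_1}w$. Summing over $j$ gives $[w]_{A_1}\varphi_w((1-q)f)\leq [w]_{A_1}\|f\|_{1,w}$, which combined with $\varphi_w(qfq)\leq\|f\|_{1,w}$ yields the claimed $\max\{1,[w]_{A_1}\}\|f\|_{1,w}$.
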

	
	\begin{proof}
		
		Fix an orthonormal basis $\Lambda$ of $L^2(\CM, \tau)$. Then observe that the set
		\begin{equation*}
			\{a \otimes \xi: a \in L^2_w(\R^d, \mu), \xi \in \Lambda \}
		\end{equation*}
		is total in $L^2_w(\CN)$. Now consider $h \in L^2_w(\CN) \cap \CN$, where $h= \sum_{1}^{l} a_i \otimes \xi_i$ with $a_i \in L^2(\R^d, w d\mu)$ and $\xi_i \in \Lambda$. Furthermore, notice that for all $k$
		\begin{equation*}
			(M_k- \CE_k)h = \sum_1^l \left( (M_k- \E_k)a_i \right) \otimes \xi_i.
		\end{equation*} 
		
		\noindent Therefore, using classical Khintchine inequalities we have
		
		\begin{align*}
			&\int_\Omega \norm{\sum_k \epsilon_k(s) (M_k - \CE_k) h}^2_{L^2_w(\CN)} dP(s)\\
			&= \int_\Omega \norm{\sum_1^l \left(\sum_k \epsilon_k(s) (M_k- \E_k)a_i \right) \otimes \xi_i}^2_{L^2_w(\CN)} dP(s)\\
			&= \sum_{i=1}^l \int_\Omega \left( \int_{\R^d}  \abs{ \sum_k \epsilon_k(s) \left(  (M_k- \E_k)  a_i \right)(x)  }^2 w(x) d \mu(x) \right) dP(s)\\
			&\approx \sum_{i=1}^l  \int_{\R^d} \sum_k \abs{ (M_k- \E_k) \left( a_i (x) \right) }^2  w(x) d \mu(x) \\
			&\lesssim [w]_{A_1}^2 \sum_{i=1}^l  \int_{\R^d} \abs{ a_i(x)}^2 w(x) d\mu(x) ~ \text{ (cf. \cite{krause2018weighted})}\\
			&= [w]_{A_1}^2 \norm{ \sum_1^l a_i \otimes \xi_i}^2_{L^2_w (\CN)}.
		\end{align*}
		
		Hence, we conclude that 
		
		\begin{align}\label{freel2bound}
			\norm{Th}_{L^2(L^\infty(\Omega) \otimes \CN_w)} \lesssim [w]_{A_1} \norm{h}_{L^2_w(\CN)}.
		\end{align}

		Hence, we can conclude that 
		\begin{align*}
			\tilde{\varphi}_w(\abs{Tg}> \lambda)
			&\lesssim \frac{[w]^2_{A_1} \norm{g}^2_{L^2_w(\CN)}}{\lambda^2}\\
			&\lesssim [w]^2_{A_1} \frac{\norm{g}_{L^1_w(\CN)}}{\lambda}, ~ \text{since } \norm{g}_\infty \lesssim \lambda.
		\end{align*} 
		
		Now, 
		\begin{align*}
			\norm{g}_{L^1_w(\CN)} 
			= \varphi (g w)
			&= \varphi_w (qfq) + \varphi_w \Big(\sum_{j \geq 1} p_j f_j p_j \Big)\\
			&= \varphi_w(qf) + \varphi_w \Big( \sum_j p_j \CE_j(f)\Big)\\
			&= \varphi_w(qf) + \varphi_w \Big( \sum_j  \CE_j( p_j f)\Big)\\
			&= \varphi_w(qf) + \varphi \Big( \sum_j  \CE_j( p_j f) w \Big)\\
			&= \varphi_w(qf) + \varphi \Big( \sum_j ( p_j f)  \CE_j(w) \Big)\\
			&= \varphi_w(qf) + \varphi \Big( \sum_j ( p_j f)  \frac{\CE_j(w)}{w} w \Big)\\
			&= \varphi_w(qf) + [w]_{A_1} \varphi_w \Big( \sum_j ( p_j f) \Big)\\
			&= \varphi_w(qf) + [w]_{A_1} \varphi_w ((1-q)f)\\
			&\leq \max\{1, [w]_{A_1}\} \norm{f}_{1,w}, ~\text{by Theorem}\ \ref{galkazka's estimate} \text{ and since }  \varphi_w(qf)\leq \norm{f}_{1,w}.
		\end{align*}
Therefore, the result follows.  
		\end{proof}
		\subsection*{Strong $(p,p)$-estimate of $T$ for $1<p<\infty$:}
		\begin{thm}\label{strongbdd}
		Let $1<p < \infty$ and $w$ be an $A_1$-weight. Then there exists a constant $C_p(w)$, depending only on $p$ and $w$ such that
		\begin{equation}
\left\Vert (T_k f) \right\Vert_{L^p(\mathcal N_w;\ell^{rc}_2)} \leq C_p(w) \left\Vert f \right\Vert_{p,w}~ \forall f \in L^p(\mathcal N_w),~ \text{when } 1<p< \infty.
\end{equation}   
		\end{thm}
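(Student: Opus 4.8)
The plan is to reduce the statement, through the Rademacher linearization, to a single interpolation/duality argument for the operator $T$ of eq.~\eqref{linearization}. By the non-commutative Khintchine inequalities (\cite{cadilhac2019noncommutative}) one has, for every $1<p<\infty$, the equivalence $\norm{(T_kf)}_{L^p(\CN_w;\ell^{rc}_2)}\simeq_p\norm{Tf}_{L^p_w(L^\infty(\Omega)\otimes\CN)}$, so it suffices to prove $\norm{Tf}_{L^p_w(L^\infty(\Omega)\otimes\CN)}\lesssim C_p(w)\norm f_{p,w}$ for the ordinary non-commutative $L^p$-norm of $T$. Two endpoint facts about $T$ are already available: the weak type $(1,1)$ bound $\norm{Tf}_{L^{1,\infty}_w(L^\infty(\Omega)\otimes\CN)}\lesssim C_1(w)\norm f_{1,w}$, which is exactly Theorem~\ref{main thm} read through the linearization, and the strong type $(2,2)$ bound of eq.~\eqref{freel2bound}. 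I would moreover record that the proof of \eqref{freel2bound} uses the weight only through the classical weighted $L^2$-inequality for the scalar square function, which by \cite{krause2018weighted} holds for \emph{every} $A_2$-weight; hence in fact $\norm{Tf}_{L^2_v(L^\infty(\Omega)\otimes\CN)}\lesssim C([v]_{A_2})\norm f_{2,v}$ for all $v\in A_2$.

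For $1<p<2$ the conclusion is then immediate. Since $L^p_w(\CN)$ and $L^p_w(L^\infty(\Omega)\otimes\CN)$ are ordinary non-commutative $L^p$-spaces over semifinite von Neumann algebras, the non-commutative Marcinkiewicz (real) interpolation theorem applied to the weak type $(1,1)$ and strong type $(2,2)$ bounds above yields $\norm{Tf}_{L^p_w(L^\infty(\Omega)\otimes\CN)}\lesssim C_p(w)\norm f_{p,w}$; translating back by Khintchine gives the asserted inequality on $L^p(\CN_w;\ell^{rc}_2)$ for $1<p<2$.

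For $2\le p<\infty$ there is no $L^\infty$-endpoint to interpolate against, so I would argue by duality. Each $T_k=M_k-\CE_k$ is self-adjoint for the \emph{unweighted} trace $\varphi$, hence $T$ is self-adjoint for $\widetilde\varphi$; combining this with Hölder's inequality and the centrality of $w$, the identity $\widetilde\varphi_w(Tf\cdot g)=\widetilde\varphi\big(f\cdot T(gw)\big)$ shows that $\norm{Tf}_{L^p_w(L^\infty(\Omega)\otimes\CN)}\lesssim C\norm f_{p,w}$ would follow once $T$ is bounded on $L^{p'}_\sigma(L^\infty(\Omega)\otimes\CN)$, where $1/p+1/p'=1$ and $\sigma:=w^{1-p'}$. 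Here $1<p'\le 2$, and $\sigma\in A_{p'}\subseteq A_2$ (indeed $\sigma^{1-p}=w\in A_1\subseteq A_p$, and $A_p$-duality gives $\sigma=w^{1-p'}\in A_{p'}$), so the $A_2$-weighted $L^2$-estimate of the first paragraph applies to $\sigma$. To get the full square-function bound for $T$ on $L^{p'}_\sigma$ with $1<p'<2$ I would re-run the Calderón--Zygmund scheme of Section~\ref{weaktypeesti} with the $A_1$-ingredients replaced by their $A_{p'}$-analogues — the $A_{p'}$-form of the weighted Doob/Cuculescu estimate of \cite{galkazka2022sharp} in place of Theorem~\ref{galkazka's estimate}, and the reverse-Hölder ($A_\infty$) inequality for $\sigma$ in place of \eqref{weight prop} and Lemma~\ref{estimate of zeta perp} — and then interpolate with the $A_2$-weighted $L^2$-bound as in the previous paragraph; alternatively, one may invoke a weighted extrapolation principle to pass directly from the $A_2$-level $L^2$-estimate to all exponents.

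The main obstacle is precisely this last step. The weak type $(1,1)$ machinery of Section~\ref{weaktypeesti} is engineered for $A_1$-weights — it rests on the $A_1$-inequality \eqref{weight prop}, on Lemma~\ref{estimate of zeta perp}, and on the $A_1$-form of Cuculescu's estimate — whereas the weight $\sigma=w^{1-p'}$ produced by duality only belongs to $A_{p'}$. So the range $p\ge 2$ genuinely requires either a (bookkeeping-heavy but routine) extension of that weak type machinery to the class $A_q$, $1<q<2$, using the $A_q$-weighted Doob inequality and the $A_\infty$-regularity of $A_q$-weights, or the availability of a non-commutative Rubio de Francia extrapolation theorem; everything else in the proof is soft interpolation and duality.
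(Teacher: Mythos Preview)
For $1<p<2$ your argument coincides with the paper's: Khintchine reduces the square function to $T$, then Marcinkiewicz-interpolate the weak $(1,1)$ of Theorem~\ref{main thm} against the strong $(2,2)$ of \eqref{freel2bound}.

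For $p>2$ the approaches diverge. The paper disposes of this range in one line: having bounded $T$ on $L^{p'}(\CN_w)$ for $1<p'<2$, it asserts that ``by an easy calculation $T^*=T$'' and dualizes with respect to $\varphi_w$. You instead observe --- correctly --- that each $T_k=M_k-\CE_k$ is self-adjoint only for the \emph{unweighted} trace $\varphi$, so that the $\varphi_w$-adjoint involves the twisted weight $\sigma=w^{1-p'}\in A_{p'}$. Your diagnosis is sound: neither $M_k$ nor $\CE_k$ is self-adjoint for $\varphi_w$ (for $\CE_k$ this would force $\CE_k(gw)=\CE_k(g)\,w$, hence $w$ $\CB_k$-measurable for every $k$), so the paper's duality step does hide precisely the difficulty you flag. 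That said, your own proposal does not close the gap either. Re-running Section~\ref{weaktypeesti} with ``$A_{p'}$-analogues'' is not routine bookkeeping: the weighted weak $(1,1)$ bound --- in particular the Cuculescu estimate $\lambda\,\varphi_\sigma(1-q)\lesssim\|f\|_{1,\sigma}$ on which the whole decomposition rests --- genuinely requires $\sigma\in A_1$; for $\sigma\in A_{p'}$ with $p'>1$ one only gets weak $(p',p')$ for the maximal function, which does not drive the Calder\'on--Zygmund scheme as written. The extrapolation alternative is more promising (the scalar square function does satisfy the full $A_p$ range by Rubio de Francia from the $A_2$-estimate of \cite{krause2018weighted}), but transporting extrapolation to the operator-valued setting is itself a nontrivial step that cannot simply be invoked. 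In sum: you match the paper for $1<p<2$; for $p>2$ you have put your finger on a real soft spot in the paper's argument, but your proposed remedies remain a program rather than a proof.
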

		\begin{proof}
		Note that by Theorem \eqref{main thm} the operator $T$ defined as in \eqref{linearization} is weak type $(1,1)$	as an operator from $L^1(\mathcal{N}_w,\varphi_w)$ to $L^{1,\infty}(L^\infty(\Omega)\otimes\mathcal{N}_w,\tilde{\varphi}_w).$ Moreover, it also follows from \cite{krause2018weighted} that $T$ is a bounded operator from $L^2(\mathcal{N}_w,\varphi_w)$ to $L^2(L^\infty(\Omega)\otimes\mathcal{N}_w,\tilde{\varphi}_w).$ We refer to the computation preceding inequality \eqref{freel2bound} for more details on this strong $(2,2)$ bound for $T.$ Therefore, by non-commutative Marcinkiewicz interpolation theorem, we obtain that $T$ is a bounded map from $L^p(\mathcal{N}_w,\varphi_w)$ to $L^p(L^\infty(\Omega)\otimes\mathcal{N}_w,\tilde{\varphi}_w)$ for all $1<p<2$. Hence by duality we obtain that $T^*$ is also a bounded linear operator from $L^{p^\prime}(\mathcal{N}_w,\varphi_w)$ to $L^{p^\prime}(L^\infty(\Omega)\otimes\mathcal{N}_w,\tilde{\varphi}_w)$ for all $1<p<2$, where $\frac{1}{p}+ \frac{1}{p'}=1$. By an easy calculation one can see that $T^*=T$ and hence $T$ extends to a bounded linear map from $L^p(\mathcal{N}_w,\varphi_w)$ to $L^p(L^\infty(\Omega)\otimes\mathcal{N}_w,\tilde{\varphi}_w)$ for all $1<p<\infty.$ The proof is now completed by non-commutative Khintchine's inequality \cite{Pisierricard2017} and \cite{PISIER2009Khintchine}.
		\end{proof}

	\textbf{Acknowledgement:} The first author acknowledges the DST-INSPIRE Faculty Fellowship DST/INSPIRE/04/2020/001132 and Prime Minister Early Career Research Grant Scheme ANRF/ECRG/2024/000699/PMS. The second author is thankful for the RA-I honorarium from ISI Delhi. We are also thankful to an anonymous referee
for their helpful comments and suggestions significantly improving the paper.

%

\providecommand{\bysame}{\leavevmode\hbox to3em{\hrulefill}\thinspace}
\providecommand{\MR}{\relax\ifhmode\unskip\space\fi MR }
\providecommand{\MRhref}[2]{%
	\href{http://www.ams.org/mathscinet-getitem?mr=#1}{#2}
}
\providecommand{\href}[2]{#2}

\end{document}